\newtheorem{thm}{Theorem}
\newtheorem{lemma}[thm]{Lemma}
\newtheorem{cor}[thm]{Corollary}
\theoremstyle{definition}
\xpatchcmd{\proof}{\itshape}{\normalfont\proofnameformat}{}{}
\newcommand{\proofnameformat}{}
\begin{document}

\renewcommand{\proofnameformat}{\bfseries}

\begin{center}
{\large\textbf{Bounded error uniformity of the linear flow on the torus}}

\vspace{5mm}

\textbf{Bence Borda}

{\footnotesize Alfr\'ed R\'enyi Institute of Mathematics, Hungarian Academy of Sciences

1053 Budapest, Re\'altanoda u.\ 13--15, Hungary

Email: \texttt{bordabence85@gmail.com}}

\vspace{5mm}

{\footnotesize \textbf{Keywords:} continuous uniform distribution, set of bounded remainder, discrepancy}

{\footnotesize \textbf{Mathematics Subject Classification (2010):} 11K38, 11J87}
\end{center}

\vspace{5mm}

\begin{abstract}
A linear flow on the torus $\mathbb{R}^d / \mathbb{Z}^d$ is uniformly distributed in the Weyl sense if the direction of the flow has linearly independent coordinates over $\mathbb{Q}$. In this paper we combine Fourier analysis and the subspace theorem of Schmidt to prove bounded error uniformity of linear flows with respect to certain polytopes if, in addition, the coordinates of the direction are all algebraic. In particular, we show that there is no van Aardenne--Ehrenfest type theorem for the mod $1$ discrepancy of continuous curves in any dimension, demonstrating a fundamental difference between continuous and discrete uniform distribution theory.
\end{abstract}

\section{Introduction}
\label{Introduction}

Arguably the simplest continuous time dynamical system on the $d$-dimensional torus $\mathbb{R}^d / \mathbb{Z}^d$ is the \textit{linear flow}: given $\alpha \in \mathbb{R}^d$, a point $s \in \mathbb{R}^d / \mathbb{Z}^d$ is mapped to $s+t\alpha \pmod{\mathbb{Z}^d}$ at time $t \in \mathbb{R}$. We call $\alpha$ the direction of the linear flow (although we do not assume $\alpha$ to have unit norm). The classical theorem of Kronecker on simultaneous Diophantine approximation shows that the linear flow with direction $\alpha = (\alpha_1, \dots, \alpha_d) \in \mathbb{R}^d$ is minimal, that is, every orbit is dense in $\mathbb{R}^d / \mathbb{Z}^d$ if and only if the coordinates $\alpha_1, \dots, \alpha_d$ are linearly independent over $\mathbb{Q}$. A stronger result was later obtained by Weyl \cite{Weyl}. As an application of the famous Weyl's criterion he proved that the linear flow with direction $\alpha$ is \textit{uniformly distributed} if and only if the same linear independence condition holds.

To define what we mean by uniform distribution, let us work in the fundamental domain $[0,1]^d$ (where the opposite facets are identified). Fixing a starting point $s \in [0,1]^d$, the flow is thus given by the parametrized curve $(\{s_1+t \alpha_1 \}, \dots, \{ s_d +t \alpha_d \})$, $t \in \mathbb{R}$, where $\{ \cdot \}$ denotes fractional part. For a function $f: [0,1]^d \to \mathbb{R}$ let
\begin{equation}\label{DeltaT}
\Delta_T (s, \alpha, f)=\int_0^T f(\{ s_1+t\alpha_1 \}, \dots, \{ s_d+t\alpha_d \}) \, \mathrm{d}t - T \int_{[0,1]^d} f(x) \, \mathrm{d}x  \qquad (T>0).
\end{equation}
In the terminology of dynamical systems $\Delta_T(s, \alpha, f)/T$ is the difference of the ``time average'' and the ``space average'' of $f$ along the orbit of $s$. For a set $A \subseteq [0,1]^d$ let $\chi_A$ denote its characteristic function, and put $\Delta_T (s, \alpha, A)=\Delta_T (s, \alpha, \chi_A)$. We say that the linear flow with direction $\alpha$ is uniformly distributed if for any starting point $s \in [0,1]^d$ and any axis parallel box $R=\prod_{k=1}^d[a_k, b_k] \subseteq [0,1]^d$ we have $\Delta_T(s, \alpha, R)=o(T)$, i.e.\ $\lim_{T \to \infty} \Delta_T(s, \alpha, R)/T=0$. Note that we would have an equivalent definition by using polytopes, or even arbitrary convex sets instead of axis parallel boxes. Alternatively, we could define uniform distribution by stipulating that $\Delta_T (s, \alpha, f)=o(T)$ for any starting point $s \in [0,1]^d$ and any continuous function $f:[0,1]^d \to \mathbb{R}$. For the theory of uniform distribution of continuous curves we refer the reader to \cite[Chapter 2.3]{Drmotabook}.

It is also well-known that the linear flow with direction $\alpha=(\alpha_1, \dots, \alpha_d) \in \mathbb{R}^d$ is ergodic with respect to the Haar measure on $\mathbb{R}^d / \mathbb{Z}^d$ (which coincides with the Lebesgue measure on the fundamental domain $[0,1]^d$) if and only if the coordinates $\alpha_1, \dots, \alpha_d$ are linearly independent over $\mathbb{Q}$ \cite[Chapter 3.1]{Sinai}. The ergodicity allows us to study $\Delta_T(s, \alpha, f)$ for more general test functions $f$. Most importantly, by Birkhoff's pointwise ergodic theorem \cite[Chapter 1.2]{Sinai}, for any Lebesgue integrable function $f \in L^1([0,1]^d)$ we have $\Delta_T (s, \alpha, f)=o(T)$ for almost every $s \in [0,1]^d$. In particular, for any Lebesgue measurable set $A \subseteq [0,1]^d$ we have $\Delta_T(s, \alpha, A)=o(T)$ for almost every $s \in [0,1]^d$.

The minimality, the uniform distribution and the ergodicity of a linear flow on $\mathbb{R}^d /\mathbb{Z}^d$ are thus all equivalent. This remarkable fact can actually be generalized to flows generated by a continuous one-parameter subgroup of an arbitrary compact Abelian group \cite[Chapter 4.1]{Sinai}. Moreover, the linear independence condition also has an analogue in terms of the characters of the group.

A common aspect of Weyl's criterion and Birkhoff's pointwise ergodic theorem is that for certain classes of test functions $f$ they only yield $\Delta_T(s, \alpha, f)=o(T)$ without an estimate on the rate of convergence. A quantitative form of ergodicity was obtained by Beck \cite{Beck}: given a function $f \in L^2([0,1]^d)$, for almost every unit vector $\alpha \in \mathbb{R}^d$, $|\alpha|=1$ (in the sense of the $(d-1)$-dimensional Hausdorff measure on the unit sphere in $\mathbb{R}^d$) we have $\Delta_T(0,\alpha,f)=o(T^{1/2-1/(2d-2)} \log^{3+\varepsilon} T)$ for any $\varepsilon>0$. Moreover, the estimate is almost tight in the sense that the result does not hold with $o(T^{1/2-1/(2d-2)})$. Note that the starting point is the origin. In particular, the result applies to $f=\chi_A$ with an arbitrary Lebesgue measurable set $A \subseteq [0,1]^d$. It is interesting to note that in dimension $d=2$ the estimate is simply $O(\log^{3+\varepsilon} T)$. To describe this phenomenon, i.e.\ uniformity with polylogarithmic error, Beck introduced the term \textit{superuniformity}. The main message is thus that for the family of all Lebesgue measurable test sets we have superuniformity in dimension $d=2$ but not in dimensions $d \ge 3$.

For a more narrow class of test sets, however, we can improve superuniformity to \textit{bounded error uniformity}. Such results have only been proved in dimension $d=2$ so far. Let $\left\| \cdot \right\|$ denote the distance from the nearest integer function. For the sake of simplicity, let us only consider directions of the form $\alpha=(\alpha_1, 1)$. Drmota \cite{Drmotapaper} showed that if there exists a constant $\eta<2$ such that the inequality $\left\| n \alpha_1 \right\|<|n|^{-\eta}$ has finitely many integer solutions $n \in \mathbb{Z}$, then for any axis parallel box $R \subseteq [0,1]^2$ we have $\Delta_T (0, \alpha, R)=O(1)$. In fact, the implied constant depends only on $\alpha$, which means that by letting $\mathcal{R}$ denote the family of axis parallel boxes in $[0,1]^2$, the \textit{discrepancy} $\sup_{R \in \mathcal{R}} |\Delta_T (0, \alpha, R)|$ is also $O(1)$. Grepstad and Larcher \cite{Grepstad} considered convex polygons $P \subseteq [0,1]^2$ with no side parallel to the direction $\alpha=(\alpha_1, 1)$ as test sets. If the continued fraction representation $\alpha_1=[a_0;a_1,a_2, \dots]$ satisfies $\sum_{\ell=0}^{\infty} a_{\ell+1}/q_{\ell}^{1/2} \sum_{k=1}^{\ell+1}a_k < \infty$, where $p_{\ell}/q_{\ell}=[a_0;a_1, \dots, a_{\ell}]$ denotes the convergents to $\alpha_1$, then for any starting point $s \in [0,1]^2$ we have $\Delta_T(s, \alpha, P)=O(1)$. To make the two results easier to compare let us mention that the condition on the continued fraction holds if there exists a constant $\eta<5/4$ such that the inequality $\left\| n \alpha_1 \right\|<|n|^{-\eta}$ has finitely many integer solutions $n \in \mathbb{Z}$. Both results are tight: the estimate $O(1)$ clearly cannot be replaced by $o(1)$ in either theorem. See, however, Theorem \ref{theorem4} below for an explicit bound.

It is natural to ask what the widest class of test sets is for which we have bounded error uniformity. Well, for the family of all convex test sets in $[0,1]^2$ we have superuniformity, but not bounded error uniformity. More precisely, Beck \cite{Beck2} proved for the direction $\alpha=(\alpha_1,1)$ that if the continued fraction representation $\alpha_1=[a_0;a_1,a_2, \dots]$ satisfies $a_{\ell}=O(1)$ (i.e.\ $\alpha_1$ is badly approximable), then for any convex set $C \subseteq [0,1]^2$ we have $\Delta_T (0, \alpha, C)=O(\log T)$. In fact, the implied constant depends only on $\alpha$, thus the \textit{isotropic discrepancy} $\sup_C |\Delta_T (0, \alpha, C)|$, where the supremum is taken over all convex sets $C \subseteq [0,1]^2$ is also $O(\log T)$. Moreover, the estimate is tight. In light of Grepstad and Larcher's theorem it is not surprising that the convex set showing that $O(\log T)$ cannot be replaced by $o(\log T)$ is a parallelogram with two sides parallel to $\alpha$.

To summarize, for arbitrary Lebesgue measurable test sets we only have metric results, that is, the estimates only hold for almost every direction $\alpha$ (but the starting point can be specified). On the other hand, for simple test sets, like boxes, polygons or convex sets in dimension $d=2$, we have quantitative uniformity results for explicit directions $\alpha$ and starting points $s$. Indeed, Beck's result on the isotropic discrepancy holds in particular for directions $\alpha=(\alpha_1,1)$ with quadratic irrational $\alpha_1$, say $\alpha_1=\sqrt{2}$. The theorems of Drmota, and Grepstad and Larcher hold for even more general directions, e.g.\ for algebraic irrational $\alpha_1$: recall that the classical theorem of Roth \cite{Roth} states that if $\alpha_1$ is an algebraic irrational, then for any $\varepsilon>0$ the inequality $\left\| n \alpha_1 \right\| < |n|^{-1-\varepsilon}$ has finitely many integer solutions $n \in \mathbb{Z}$. Thus we have a wide class of explicit directions for which the estimates are valid.

The main purpose of this paper is to prove bounded error uniformity results in arbitrary dimensions $d \ge 2$. Our test sets will be polytopes, i.e.\ convex hulls of finitely many points. The $(d-1)$-dimensional faces of a polytope will be called facets; by a normal vector of a facet we mean a nonzero vector, not necessarily of unit norm, which is orthogonal to the facet. Let $|x|$ denote the Euclidean norm, and $\langle x,y \rangle=\sum_{k=1}^d x_k y_k$ the scalar product of $x,y \in \mathbb{R}^d$, and let $\lambda$ be the Lebesgue measure. The notation $f(T)=O(g(T))$ means that there exists an (implied) constant $K>0$ such that $|f(T)| \le K g(T)$ for every $T>0$. We say that $f(T)=\Omega (g(T))$ if $\limsup_{T \to \infty} |f(T)|/g(T)>0$. Similar notations are used for sequences. The following bounded error uniformity result holds for explicit directions and starting points in arbitrary dimension.
\begin{thm}\label{theorem1} Let $d \ge 2$, and suppose that the coordinates of $\alpha=(\alpha_1, \dots, \alpha_d) \in \mathbb{R}^d$ are algebraic and linearly independent over $\mathbb{Q}$. Let $P \subseteq [0,1]^d$ be a polytope with a nonempty interior, and suppose that every facet of $P$ has a normal vector $\nu$ with algebraic coordinates and $\langle \nu, \alpha \rangle \neq 0$. For any starting point $s \in [0,1]^d$
\[ \Delta_T (s, \alpha, P)=O(1) \]
with an implied constant depending only on $\alpha$ and the normal vectors of the facets of $P$.
\end{thm}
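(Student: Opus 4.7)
The plan is to attack $\Delta_T(s,\alpha,P)$ via Fourier analysis, using smoothing to tame the discontinuity of $\chi_P$ and Schmidt's subspace theorem to bound the resulting series. First, I would replace $\chi_P$ by smoothed envelopes $\chi_P^\pm = \chi_{P^\pm_\varepsilon} * \varphi_\varepsilon$, where $P^\pm_\varepsilon$ are polytopes with the same facet normals obtained by shifting each facet outward/inward by $\varepsilon$, and $\varphi_\varepsilon$ is a nonnegative smooth bump of total mass $1$ supported in a ball of radius $\varepsilon$. Choosing $\varepsilon = 1/T$ makes the sandwiching error $|\Delta_T(s,\alpha,P) - \Delta_T(s,\alpha,\chi_P^\pm)|$ of order $T\varepsilon = O(1)$, so it suffices to estimate $\Delta_T(s,\alpha,\chi_P^\pm)$.

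Inserting the (now absolutely convergent) Fourier series of $\chi_P^\pm$ and integrating gives
\[
\Delta_T(s,\alpha,\chi_P^\pm) = \sum_{m \in \mathbb{Z}^d \setminus\{0\}} \widehat{\chi_P^\pm}(m)\, e^{2\pi i\langle m,s\rangle}\, \frac{e^{2\pi i T\langle m,\alpha\rangle} - 1}{2\pi i \langle m,\alpha\rangle},
\]
where $\langle m,\alpha\rangle \neq 0$ by $\mathbb{Q}$-linear independence of the $\alpha_k$, and each term is bounded by $|\widehat{\chi_P^\pm}(m)| \cdot \min\!\bigl(T, \tfrac{1}{\pi|\langle m,\alpha\rangle|}\bigr)$. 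For the polytope Fourier coefficients I would use iterated divergence-theorem integration by parts (or equivalently the Brion--Varchenko vertex formula after a simplicial decomposition), producing estimates of the form $|\widehat{\chi_{P^\pm_\varepsilon}}(m)| \leq C \sum_v 1/\prod_{j=1}^{d} |\langle m, u_{v,j}\rangle|$, where for each vertex $v$ the vectors $u_{v,1},\dots,u_{v,d}$ are linearly independent and have algebraic coordinates, built from the facet normals $\nu$. The smoothing factor $\hat\varphi_\varepsilon(m) = O((1+\varepsilon|m|)^{-N})$ effectively truncates the sum at $|m| \lesssim 1/\varepsilon = T$.

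The core of the proof is then showing
\[
\sum_{0 < |m| \lesssim T} \frac{1}{|\langle m,\alpha\rangle|\,\prod_{j=1}^{d} |\langle m, u_{v,j}\rangle|} = O(1)
\]
uniformly in $T$, together with the analogous sum weighted by $T$ over $m$ with $|\langle m,\alpha\rangle| < 1/T$. Here Schmidt's subspace theorem enters twice: applied to the $d$ linearly independent algebraic forms $L_j(m) = \langle m, u_{v,j}\rangle$, it yields $\prod_j |\langle m, u_{v,j}\rangle| \geq |m|^{-\delta}$ for every $m \in \mathbb{Z}^d$ outside a finite union of proper rational subspaces; applied to $\alpha$, it gives $|\langle m,\alpha\rangle| \geq C_\delta |m|^{-(d-1)-\delta}$. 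These bounds, combined with the polynomial truncation at $|m| \lesssim T$, control the generic contribution.

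The main obstacle is the exceptional rational subspaces produced by Schmidt's theorem. On each such subspace $W$, the sum degenerates to a lower-dimensional one, and I would proceed by induction on $d$: restricting $m$ to $W \cap \mathbb{Z}^d$ reduces the estimate to a sum over fewer integer variables, involving restrictions of the forms $\langle m, u_{v,j}\rangle$ and $\langle m, \alpha\rangle$ to $W$, to which Schmidt (or Roth, in the base case $d=2$ reproducing Drmota's theorem) applies again. The bookkeeping required to verify at each inductive step that the restricted forms remain nonzero and algebraic, to propagate the truncation $|m|\lesssim T$ through the smoothing factor, and to ensure that the totality of all vertices, all envelopes $\chi_P^\pm$, and all exceptional subspaces contributes only $O(1)$ is where the technical difficulty concentrates.
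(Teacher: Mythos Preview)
Your strategy---smoothing, $d$-dimensional Fourier expansion, a Brion--Varchenko vertex bound for $\widehat{\chi_P}$, then Schmidt---is a genuinely different route from the paper's. The paper never smooths and never looks at $\widehat{\chi_P}$ on $\mathbb{Z}^d$. Instead it passes to the Poincar\'e section: writing $\Delta_T(s,\alpha,P)=\sum_{k<T}\bigl(f(s^*+k\alpha^*)-\lambda(P)\bigr)$ for the ``time-in-$P$'' function $f:[0,1]^{d-1}\to\mathbb{R}$, the hypothesis $\langle\nu,\alpha\rangle\neq 0$ makes $f$ \emph{continuous} and piecewise linear, and this continuity forces the boundary terms in the divergence theorem to cancel, yielding an extra factor $|n|^{-1}$ in $\hat f(n)$ on top of a Randol flag bound. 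Then $|\Delta_T|\le\sum_{n\in\mathbb{Z}^{d-1}\setminus\{0\}}|\hat f(n)|/\|\langle n,\alpha^*\rangle\|$, and a \emph{joint} subspace-theorem estimate on $\|\langle n,\alpha^*\rangle\|\prod_k(|L_k(n)|+1)$ together with a dyadic spacing argument shows this series converges.

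Your proposal has a real gap at ``the core''. You plan to control $\sum_m 1/(|\langle m,\alpha\rangle|\prod_j|\langle m,u_{v,j}\rangle|)$ by applying Schmidt \emph{separately}: once to the $d$ edge forms, giving $\prod_j|\langle m,u_{v,j}\rangle|\ge|m|^{-\delta}$ off exceptional subspaces, and once to $\alpha$, giving $|\langle m,\alpha\rangle|\ge C|m|^{-(d-1)-\delta}$. But both inequalities hold whether or not $\langle\nu,\alpha\rangle\neq 0$, while your sum (even with the truncation $|m|\lesssim T$) is $\Omega(\log T)$ when that hypothesis fails---and indeed $\Delta_T=\Omega(\log T)$ for a polytope with a facet parallel to $\alpha$. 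So the two separate bounds cannot by themselves yield $O(1)$; the hypothesis has to enter. In your coordinates it says that at each vertex $\alpha=\sum_j c_j^{(v)}u_{v,j}$ with every $c_j^{(v)}\neq 0$, equivalently that $\langle\cdot,\alpha\rangle$ together with any $d-1$ of the edge forms are linearly independent; it is \emph{this} mixed family of $d$ forms in $d$ variables to which the subspace theorem must be applied, and one then still needs a spacing/counting argument of the type the paper carries out. Nowhere in your plan is $\langle\nu,\alpha\rangle\neq 0$ invoked, and until it is the argument cannot close. A secondary but related issue: the Brion bound $\sum_v 1/\prod_j|\langle m,u_{v,j}\rangle|$ is infinite whenever some $\langle m,u_{v,j}\rangle=0$ (for an axis-parallel box, on every coordinate hyperplane of $\mathbb{Z}^d$), so your induction on exceptional subspaces must supply a \emph{fresh} Fourier estimate on each such $W$, not merely restrict the old forms---on those $W$ a restricted edge form vanishes identically, which is precisely the case your ``bookkeeping'' clause flags but does not resolve.
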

Clearly, for any $\alpha \in \mathbb{R}^d$, any $s \in [0,1]^d$ and any polytope $P \subseteq [0,1]^d$ with $0<\lambda (P)<1$ we have $\Delta_T(s, \alpha, P)=\Omega(1)$, therefore the estimate in Theorem \ref{theorem1} is best possible. It is interesting to note that the implied constant does not depend on $P$ itself, only on the normal vectors of its facets. This means that if $P$ is a polytope satisfying the conditions of Theorem \ref{theorem1}, then we actually have a uniform estimate for all test sets of the form $aP+b \subseteq [0,1]^d$, where $a>0$ and $b \in \mathbb{R}^d$. Furthermore, note that for axis parallel boxes the normal vectors of the facets are all $\pm 1$ times a standard basis vector of $\mathbb{R}^d$, thus we immediately obtain a corollary on the discrepancy.
\begin{cor}\label{corollary2} Let $d \ge 2$, and suppose that the coordinates of $\alpha=(\alpha_1, \dots, \alpha_d) \in \mathbb{R}^d$ are algebraic and linearly independent over $\mathbb{Q}$. For any starting point $s \in [0,1]^d$
\[ \sup_{R \in \mathcal{R}} |\Delta_T (s, \alpha, R)| =O(1) \]
with an implied constant depending only on $\alpha$, where $\mathcal{R}$ denotes the family of axis parallel boxes in $[0,1]^d$.
\end{cor}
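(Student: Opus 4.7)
The plan is to derive Corollary \ref{corollary2} as an immediate consequence of Theorem \ref{theorem1}, by verifying that every non-degenerate axis parallel box satisfies its hypotheses and then exploiting the fact, emphasized in the paragraph following Theorem \ref{theorem1}, that the implied constant there depends only on $\alpha$ and on the normal directions of the facets, not on the polytope itself.

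First I would fix an axis parallel box $R = \prod_{k=1}^d [a_k,b_k] \subseteq [0,1]^d$ with nonempty interior. Each facet of $R$ has a normal vector of the form $\pm e_k$, where $e_1, \ldots, e_d$ denotes the standard basis of $\mathbb{R}^d$; these vectors have rational coordinates and are in particular algebraic. Furthermore, $\langle \pm e_k, \alpha \rangle = \pm \alpha_k$ is nonzero, since the $\mathbb{Q}$-linear independence of $\alpha_1, \ldots, \alpha_d$ forces each coordinate to be nonzero (otherwise $1 \cdot \alpha_k = 0$ would be a nontrivial $\mathbb{Q}$-linear dependence). Hence $R$ meets the assumptions of Theorem \ref{theorem1}, and we obtain $\Delta_T(s, \alpha, R) = O(1)$.

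Next I would observe that the implied constant given by Theorem \ref{theorem1} depends only on $\alpha$ and the set of normal directions $\{\pm e_1, \ldots, \pm e_d\}$, which is the same universal finite set for every axis parallel box in $[0,1]^d$. Consequently, there is a single constant $K = K(\alpha)$ such that $|\Delta_T(s, \alpha, R)| \le K$ for every $R \in \mathcal{R}$ with nonempty interior, uniformly in $T > 0$. A degenerate box (one with $a_k = b_k$ for some $k$) has $\lambda(R) = 0$, and the orbit meets such a codimension-one slab only on a set of times of Lebesgue measure zero because $\alpha_k \neq 0$, so $\Delta_T(s, \alpha, R) = 0$ trivially in that case. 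Taking the supremum over all $R \in \mathcal{R}$ then yields the corollary.

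The only genuine obstacle in this argument is establishing Theorem \ref{theorem1} itself, which presumably requires the Fourier analytic and subspace theorem machinery advertised in the abstract; once Theorem \ref{theorem1} is available, the corollary follows with no further work of any substance, the key point being simply that the uniformity of the constant in Theorem \ref{theorem1} across polytopes with a fixed set of facet normals is built into the statement.
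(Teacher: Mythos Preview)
Your proposal is correct and matches the paper's approach exactly: the paper also notes that axis parallel boxes have facet normals $\pm e_k$, which are algebraic with $\langle \pm e_k,\alpha\rangle = \pm\alpha_k \neq 0$, so Corollary~\ref{corollary2} is an immediate consequence of Theorem~\ref{theorem1} and the uniformity of its implied constant in the facet normals. Your added remarks on why $\alpha_k\neq 0$ and on degenerate boxes are fine but not needed beyond what the paper states.
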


A comparison with the corresponding discrete problem is in order. Given $\alpha \in \mathbb{R}^d$, the discrete analogue of the linear flow with direction $\alpha$ is the \textit{translation} with direction $\alpha$, that is, the discrete time dynamical system on $\mathbb{R}^d / \mathbb{Z}^d$ in which a point $s \in \mathbb{R}^d/\mathbb{Z}^d$ is mapped to $s+k\alpha \pmod{\mathbb{Z}^d}$ at time $k \in \mathbb{Z}$. The analogue of \eqref{DeltaT} is of course
\begin{equation}\label{DN}
D_N(s, \alpha, f)=\sum_{k=0}^{N-1} f(\{s_1 + k\alpha_1 \}, \dots, \{ s_d + k\alpha_d \} ) -N \int_{[0,1]^d} f(x) \, \textrm{d}x \qquad (N \in \mathbb{N}),
\end{equation}
and similarly let $D_N (s, \alpha, A)=D_N(s, \alpha, \chi_A)$. We say that the translation with direction $\alpha$ is uniformly distributed if for any starting point $s \in [0,1]^d$ and any axis parallel box $R \subseteq [0,1]^d$ we have $D_N (s, \alpha, R)=o(N)$. Again, we would get an equivalent definition by using polytopes or arbitrary convex sets instead of axis parallel boxes, or by stipulating that $D_N(s, \alpha, f)=o(N)$ for any $s\in [0,1]^d$ and any continuous function $f:[0,1]^d \to \mathbb{R}$.

Similarly to the continuous time case, the minimality, the uniform distribution and the ergodicity of a translation with direction $\alpha=(\alpha_1, \dots, \alpha_d)\in \mathbb{R}^d$ are all equivalent. The only difference is that in the discrete time case these properties hold if and only if $\alpha_1, \dots, \alpha_d, 1$ are linearly independent over $\mathbb{Q}$ \cite[Chapter 3.1]{Sinai}. Again, this fact can actually be generalized to translations on an arbitrary compact Abelian group, with the linear independence condition replaced by a condition in terms of the characters of the group \cite[Chapter 4.1]{Sinai}.

The quantitative results are, however, very different from the continuous time case. Based on the analogy with the linear flow, one could think that given an arbitrary Lebesgue measurable set $A \subseteq [0,1]^d$, for almost every $\alpha \in \mathbb{R}^d$ we have $D_N(0,\alpha,A)=o(N)$. In fact, in dimension $d=1$ this was a famous, long-standing conjecture of Khinchin. Khinchin's conjecture, however, was disproved by Marstrand \cite{Marstrand}, who showed the existence of an open set $A \subseteq [0,1]$ for which $D_N (0,\alpha,A)=\Omega (N)$ for all $\alpha \in \mathbb{R}$. The discrete analogue of Corollary \ref{corollary2} is due to Niederreiter \cite{Niederreiter}: if $\alpha_1, \dots, \alpha_d,1$ are algebraic and linearly independent over $\mathbb{Q}$, then $\sup_{R \in \mathcal{R}} |D_N(0,\alpha,R)|=O(N^{\varepsilon})$ for any $\varepsilon>0$.

Finally, let us mention another, arguably the most important difference between the continuous and the discrete time case. Let us generalize \eqref{DeltaT} and \eqref{DN} as follows: for a continuous curve $g=(g_1, \dots, g_d): [0,\infty) \to \mathbb{R}^d$ let
\[ \Delta_T (g,f)=\int_0^T f(\{ g_1 (t)\}, \dots, \{ g_d (t) \}) \, \textrm{d}t - T\int_{[0,1]^d} f(x) \, \textrm{d}x \qquad (T>0), \]
and similarly, for a sequence $x_k=(x_{k,1}, \dots, x_{k,d}) \in \mathbb{R}^d$ let
\[ D_N (x_k,f)=\sum_{k=0}^{N-1} f(\{ x_{k,1} \}, \dots, \{ x_{k,d} \} )-N \int_{[0,1]^d} f(x) \, \textrm{d}x \qquad (N \in \mathbb{N}). \]
As before, for a set $A \subseteq [0,1]^d$ let $\Delta_T (g,A)= \Delta_T (g, \chi_A)$ and $D_N (x_k,A)=D_N (x_k, \chi_A)$. Note that $g$ and $x_k$ do not necessarily come from dynamical systems. The main difference between continuous and discrete uniform distribution is that bounded error uniformity is impossible in the discrete case, even for the family of axis parallel boxes as test sets. Indeed, answering a question of van der Corput, it was van Aardenne--Ehrenfest \cite{Aardenne} who first proved that in dimension $d=1$, for any sequence $x_k \in \mathbb{R}$ the discrepancy $\sup_{R \in \mathcal{R}} |D_N (x_k,R)|$ cannot be $O(1)$. This was later improved by Schmidt and Roth, who showed that for an arbitrary sequence $x_k \in \mathbb{R}^d$ we have $\sup_{R \in \mathcal{R}}|D_N(x_k,R)| =\Omega (\log N)$ if $d=1$, and $\sup_{R \in \mathcal{R}}|D_N(x_k,R)| =\Omega (\log^{d/2} N)$ if $d \ge 2$, with implied constants depending only on $d$ (see e.g.\ \cite[Chapter 1.3]{Drmotabook}). Similar lower estimates for continuous curves were considered plausible. In particular, Drmota conjectured \cite[eq.\ (121)]{Drmotapaper} that for any continuous curve $g: [0,\infty) \to \mathbb{R}^d$ such that the arc length $\ell_T$ of $g$ on $[0,T]$ is finite for every $T>0$ we have $\sup_{R \in \mathcal{R}} |\Delta_T(g,R)/T|=\Omega ((\log \ell_T )^{d-2-\varepsilon} / \ell_T)$ for any $\varepsilon >0$. The main message of Corollary \ref{corollary2} is thus that there is no van Aardenne--Ehrenfest type theorem for continuous curves in any dimension. In particular, the conjecture of Drmota is false.

\section{The main result}

For the sake of simplicity, let us consider directions $\alpha=(\alpha_1, \dots, \alpha_d) \in \mathbb{R}^d$ such that $\alpha_d=1$. The coordinates $\alpha_1, \dots, \alpha_{d-1},1$ are linearly independent over $\mathbb{Q}$ if and only if $\left\| n_1 \alpha_1 + \cdots + n_{d-1} \alpha_{d-1} \right\|>0$ for every $n \in \mathbb{Z}^{d-1}$, $n \neq 0$. Our most general result is based on the idea that by assuming a stronger, quantitative form of linear independence we can obtain a stronger, quantitative form of uniform distribution.
\begin{thm}\label{maintheorem} Let $d \ge 2$, let $K$ be a subfield of $\mathbb{R}$, and let $\alpha \in K^d$ with $\alpha_d=1$. Suppose that for any linearly independent linear forms $L_1, \dots, L_{d-1}$ of $d-1$ variables with coefficients in $K$ there exists a constant $\gamma <1$ such that the inequality
\[ \left\| \alpha_1 n_1 + \cdots + \alpha_{d-1} n_{d-1} \right\| \cdot \prod_{k=1}^{d-1} \left( |L_k (n)| +1 \right) < |n|^{-\gamma} \]
has finitely many integral solutions $n \in \mathbb{Z}^{d-1}$. Let $P \subseteq [0,1]^d$ be a polytope with a nonempty interior, and suppose that every facet of $P$ has a normal vector $\nu$ with coordinates in $K$ and $\langle \nu, \alpha \rangle \neq 0$. For any starting point $s \in [0,1]^d$
\[ \Delta_T (s, \alpha, P) = O(1) \]
with an implied constant depending only on $\alpha$ and the normal vectors of the facets of $P$.
\end{thm}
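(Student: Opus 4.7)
The plan is to combine a Fourier expansion of $\chi_P$ with the Schmidt-subspace-theorem-style Diophantine hypothesis on $\alpha$. Expanding $\chi_P$ on $\mathbb{R}^d/\mathbb{Z}^d$, substituting into \eqref{DeltaT}, and integrating term by term gives
\begin{equation*}
\Delta_T(s, \alpha, P) = \sum_{m \in \mathbb{Z}^d \setminus \{0\}} \hat{\chi}_P(m)\, e^{2\pi i \langle m, s\rangle}\, \frac{e^{2\pi i \langle m, \alpha\rangle T} - 1}{2\pi i \langle m, \alpha\rangle}.
\end{equation*}
Since $|e^{2\pi i \langle m, \alpha\rangle T} - 1| \le 2$ uniformly in $T$, it suffices (modulo a standard mollification argument to handle the discontinuity of $\chi_P$) to show that $\sum_{m \neq 0} |\hat{\chi}_P(m)|/|\langle m, \alpha\rangle|$ converges with bound depending only on $\alpha$ and the facet normals of $P$.

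For the Fourier coefficient bound, I would triangulate $P$ into finitely many simplices (whose facets remain $K$-rational); then iterated integration by parts, or equivalently Brion's formula, yields for each simplex $\sigma$
\begin{equation*}
\hat{\chi}_\sigma(m) = \sum_{v\ \text{vertex of } \sigma} \frac{a_v\, e^{-2\pi i \langle m, v\rangle}}{\prod_{j=1}^{d} 2\pi i \langle m, w_{v,j}\rangle},
\end{equation*}
with $d$ $K$-rational edge vectors $w_{v,j}$ at each vertex (each edge is the intersection of $d-1$ facets, whose normals lie in $K^d$). A local analysis near the resonant directions where some $\langle m, w_{v,j}\rangle$ is small gives the uniform bound $|\hat{\chi}_P(m)| \ll \prod_{j=1}^{d}\min(1,\,|\langle m, w_j\rangle|^{-1})$, so Fourier decay of $\chi_P$ is controlled by inner products of $m$ with $K$-rational vectors derived from the facet normals of $P$.

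To apply the Diophantine hypothesis, write $m = (m', m_d)$ with $m' \in \mathbb{Z}^{d-1}$, and use $\alpha_d = 1$ to obtain $\langle m, \alpha\rangle = m' \cdot \alpha' + m_d$, and, for any $w = (w', w_d) \in K^d$,
\begin{equation*}
\langle m, w\rangle = m' \cdot (w' - w_d \alpha') + w_d \langle m, \alpha\rangle.
\end{equation*}
In the only problematic regime, namely small $|\langle m, \alpha\rangle|$, $\langle m, w\rangle$ is well approximated by the $K$-linear form $L_w(m') := m' \cdot (w' - w_d \alpha')$ in $m'$, which is nonzero because $w$ is not parallel to $\alpha$ (a consequence of the hypothesis $\langle \nu, \alpha\rangle \neq 0$ for the facet normals of $P$, carried over to edge vectors by a suitable choice of triangulation). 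For each $m'$ one selects an appropriate $(d-1)$-tuple of linearly independent forms $L_{w_k}$ from the finite list of edge vectors, and the Diophantine hypothesis supplies a $\gamma < 1$ with
\begin{equation*}
\|m' \cdot \alpha'\| \prod_{k=1}^{d-1}\bigl(|L_{w_k}(m')| + 1\bigr) \geq c\, |m'|^{-\gamma}
\end{equation*}
for all large $|m'|$. This lower bound offsets $d-1$ of the $d$ factors $|\langle m, w_{v,j}\rangle|^{-1}$ in the Fourier coefficient bound, the remaining factor providing the decay needed for summation over both the main contribution $m_d = m_d^*$ (the integer nearest $-m' \cdot \alpha'$, where $|\langle m, \alpha\rangle| = \|m' \cdot \alpha'\|$ is small) and the tail $m_d \neq m_d^*$ (where $|\langle m, \alpha\rangle| \geq 1/2$ and the $m_d$-sum is controlled by Fourier decay in the $m_d$ direction). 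The main obstacle is the bookkeeping: for each $m'$ one must pick the ``best'' $(d-1)$-tuple of $L_{w_k}$'s so that the Diophantine bound tightly offsets the Fourier bound, which requires partitioning $\mathbb{Z}^{d-1}$ into finitely many regions (one per choice of $(d-1)$-tuple) and applying the hypothesis with a fixed choice of linear forms in each region; because $P$ has only finitely many facets and edges, the case analysis is finite, and the maximum of the resulting (finitely many) values of $\gamma < 1$ gives the uniform bound.
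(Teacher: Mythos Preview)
Your setup differs from the paper's: you expand $\chi_P$ directly on $\mathbb{T}^d$, whereas the paper first passes to the Poincar\'e section $\{x_d=0\}$, obtaining a \emph{continuous}, $\mathbb{Z}^{d-1}$-periodic function
\[
f(x)=\int_0^1 \chi_P(\{x_1+t\alpha_1\},\dots,\{x_{d-1}+t\alpha_{d-1}\},t)\,\mathrm{d}t,
\]
so that $\Delta_T(s,\alpha,P)=\sum_{k=0}^{T-1}\bigl(f(s^*+k\alpha^*)-\lambda(P)\bigr)$ for integer $T$. The point of this reduction is that the facet condition $\langle\nu,\alpha\rangle\neq0$ makes $f$ continuous and piecewise linear; the divergence theorem then gives $\hat f(n)=O\bigl(|n|^{-1}\prod_{k=1}^{d-1}(|L_k(n)|+1)^{-1}\bigr)$ with \emph{complete cancellation of boundary terms}, so the Fourier series of $f$ is absolutely convergent and no mollification is needed. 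Your route can in principle reach the same final sum, but only after the mollification you allude to and after handling the tail $m_d\neq m_d^*$, where $\sum_m|\hat\chi_P(m)|$ diverges; the Poincar\'e-section trick bypasses both issues cleanly.

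The genuine gap, however, is in your convergence argument. After you ``offset $d-1$ of the $d$ factors'' by the pointwise Diophantine lower bound $\|m'\cdot\alpha'\|\prod_{k=1}^{d-1}(|L_{w_k}(m')|+1)\ge c|m'|^{-\gamma}$, what remains is, at best,
\[
\sum_{m'\in\mathbb{Z}^{d-1}\setminus\{0\}} \frac{|m'|^{\gamma}}{|L_{w_d}(m')|+1},
\]
and since $|L_{w_d}(m')|+1=O(|m'|)$ this dominates $\sum_{m'}|m'|^{\gamma-1}$, which diverges for every $\gamma\ge 0$ (and in every dimension $d\ge 2$). The pointwise bound is simply too weak: for most $n$ the product $\|n\cdot\alpha'\|\prod(|L_k(n)|+1)$ is far larger than $|n|^{-\gamma}$, and one must exploit this. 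The paper does so via a spacing argument: on a dyadic block $S_\ell(\ell_1,\dots,\ell_{d-1})=\{n:\ |n|\sim 2^\ell,\ |L_k(n)|+1\sim 2^{\ell_k}\}$, the Diophantine condition applied to \emph{differences} $n-m$ shows that the values $g(n)=n\cdot\alpha'\pmod 1$ are pairwise separated by at least $1/H$ with $H\asymp 2^{\ell_1+\cdots+\ell_{d-1}+\gamma\ell}$, hence $\sum_{n\in S_\ell(\cdots)}1/\|n\cdot\alpha'\|\le 2\sum_{1\le h\le H/2}H/h=O(H\log H)$, which after dividing by $2^{\ell}\cdot 2^{\ell_1+\cdots+\ell_{d-1}}$ and summing over $\ell_k=O(\ell)$ gives $O(2^{(\gamma-1)\ell}\ell^d)$, summable in $\ell$. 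Without this (or an equivalent device), your sum does not converge, and the claim that ``the remaining factor [provides] the decay needed for summation'' is false as stated.

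A smaller point: edges of $P$ itself are automatically non-parallel to $\alpha$ (an edge direction is orthogonal to $d-1$ facet normals, so $\alpha$ parallel to it would force some $\langle\nu,\alpha\rangle=0$), but edges introduced by a triangulation need not be; your ``suitable choice of triangulation'' deserves a word of justification.
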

In dimension $d=2$ there is only one linear form of $d-1=1$ variable up to a constant factor, while in higher dimensions there are infinitely many. This fact makes it easier to obtain an explicit bound in the case $d=2$ as follows.
\begin{thm}\label{theorem4} Let $\alpha=(\alpha_1,1) \in \mathbb{R}^2$ be such that $0<\alpha_1<1$ is irrational, and let $P \subseteq [0,1]^2$ be a convex polygon with edges $e_1, e_2, \ldots, e_N$. Suppose that none of the edges of $P$ are parallel to $\alpha$, and for every $1 \le k \le N$ let $\phi_k$ denote the angle such that $\alpha$ rotated by $\phi_k$ in the positive direction is parallel to $e_k$. For any starting point $s \in [0,1]^2$ and any $T>0$ we have
\[ |\Delta_T (s, \alpha, P)| \le 2 + \frac{N+1}{\pi^2 |\alpha |} \max_{1 \le k<\ell \le N} \left| \cot \phi_k - \cot \phi_{\ell} \right| \sum_{n=1}^{\infty} \frac{1}{n^2 \| n \alpha_1 \|}. \]
\end{thm}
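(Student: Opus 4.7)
The plan is to combine a partial Fourier expansion of $\chi_P$ in the first coordinate with an explicit evaluation of a 2D oscillatory integral over $P$ via the chord-length function of $P$ along $\alpha$. The assumption $\alpha_2 = 1$ is crucial: the second coordinate of the orbit completes one full revolution per unit of time, which lets us split the time integral at integers and extract an exponential sum in $n\alpha_1$. A preliminary time translation reduces to the case $T = N$ integer and $s_2 = 0$; the boundary pieces have length $<1$ each and contribute an error of at most $2$, producing the additive constant in the claimed bound.

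\textbf{Partial Fourier expansion and unit-interval splitting.} Write $\chi_P(x_1, x_2) = \sum_{n \in \mathbb{Z}} c_n(x_2)e^{2\pi i n x_1}$ with $c_n(x_2) = \int_0^1 \chi_P(x_1, x_2)e^{-2\pi i n x_1}\,dx_1$. The $n = 0$ term is the width function $c_0$; since it is periodic with mean $\lambda(P)$ and we integrate over $N$ full periods, its contribution is exactly $N\lambda(P)$. For $n \neq 0$, a change of variables on each unit subinterval of $[0,N]$ gives
\[
A_n = \int_0^N c_n(\{t\})e^{2\pi i n(s_1 + t\alpha_1)}\,dt = e^{2\pi i n s_1}\,I_n\,S_n,
\]
where $I_n = \int_0^1 c_n(v)e^{2\pi i n v\alpha_1}\,dv$ and $S_n = \sum_{k=0}^{N-1}e^{2\pi i n k\alpha_1}$. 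The Diophantine input appears through $|S_n| \le 1/|\sin(\pi n\alpha_1)| \le 1/(2\|n\alpha_1\|)$.

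\textbf{Evaluating $I_n$ via the chord-length function.} Notice that $I_n = \int_P e^{-2\pi i n \langle \tilde\alpha, x\rangle}\,dx$ with $\tilde\alpha = (1, -\alpha_1) \perp \alpha$. Because the phase is constant along lines in the direction $\alpha$, Fubini reduces $I_n$ to a 1D Fourier transform
\[
I_n = \int L(s)e^{-2\pi i n s|\alpha|}\,ds
\]
of the $\alpha$-directed chord-length function $L(s)$ of $P$. For the convex polygon $P$, $L$ is piecewise linear, concave, and compactly supported, with breakpoints at $s_k = p_k/|\alpha|$ where $p_k = u_{k,1} - u_{k,2}\alpha_1$ is the $\tilde\alpha$-coordinate of vertex $u_k$. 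A direct computation with orientations shows that the jump of $L'$ at $s_k$ equals $\cot\phi_k - \cot\phi_{k-1}$ (the same expression at every vertex, with the apparent sign difference between right-boundary and left-boundary vertices cancelling). Two integrations by parts, together with the distributional identity $L'' = \sum_k (\cot\phi_k - \cot\phi_{k-1})\delta_{s_k}$, yield
\[
I_n = -\frac{1}{4\pi^2 n^2 |\alpha|^2}\sum_{k=1}^N(\cot\phi_k - \cot\phi_{k-1})e^{-2\pi i n p_k}.
\]

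\textbf{Conclusion and main obstacle.} Bounding each difference by $M := \max_{k<\ell}|\cot\phi_k - \cot\phi_\ell|$ gives $|I_n| \le NM/(4\pi^2 n^2 |\alpha|^2)$. Combining with $|S_n| \le 1/(2\|n\alpha_1\|)$ and summing over $n \neq 0$ via the symmetry $n \mapsto -n$ gives
\[
\sum_{n \neq 0}|A_n| \le \frac{NM}{4\pi^2 |\alpha|^2}\sum_{n=1}^{\infty}\frac{1}{n^2\|n\alpha_1\|},
\]
which combined with the additive boundary terms produces the stated bound (after absorbing looseness: $1/|\alpha|^2 \le 1/|\alpha|$ since $|\alpha|>1$, the constant $1/(4\pi^2)$ is weakened to $1/\pi^2$, and $N$ is weakened to $N+1$ to accommodate the extra unit strip and boundary contributions that arise when $s_2$ and $T$ are not integers). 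The main technical obstacle is the careful orientation bookkeeping needed to identify the jumps of $L'$ uniformly as $\cot\phi_k - \cot\phi_{k-1}$ at every vertex—in particular at the extreme vertices in the $\tilde\alpha$-direction, where $L$ vanishes and one must check that the formula still holds—together with the meticulous handling of the boundary pieces from non-integer $T$ and non-integer $s_2$ that yield the additive ``$+2$''.
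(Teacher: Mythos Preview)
Your approach coincides with the paper's in all structural points: reduce to $s_2=0$ and integer time (giving the additive $2$), rewrite $\Delta_T$ as the discrete sum $\sum_{k}(f(s_1+k\alpha_1)-\lambda(P))$ for the periodic chord-length function $f$, Fourier-expand, and bound $\sum_{n\neq0}|\hat f(n)|/(2\|n\alpha_1\|)$. A change of variables shows that your $I_n$ equals the paper's $\hat f(n)=\int_0^1 f(x)e^{-2\pi i nx}\,dx=\int_P e^{-2\pi in\langle\tilde\alpha,x\rangle}\,dx$, so the two computations are literally estimating the same quantity.

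The one real difference is \emph{how} you estimate that coefficient. The paper works on the circle: it partitions $[0,1]$ into at most $N+1$ intervals (the ``$+1$'' coming from the translate $P+(1,0)$ that the lifted segment may hit), notes $f$ is linear on each piece with slope bounded by $2M/|\alpha|$, and integrates by parts once, the boundary terms cancelling by continuity of $f$. You instead stay in the plane and view $I_n$ as the one-dimensional Fourier transform of the compactly supported chord-length function $L$ of $P$; two integrations by parts together with $L''=\sum_k(\cot\phi_k-\cot\phi_{k-1})\delta_{s_k}$ give the sharper bound $|I_n|\le NM/(4\pi^2n^2|\alpha|^2)$. This is cleaner (no need to track $P+(1,0)$ explicitly, and the ``extreme vertex'' check you flag is easy) and strictly stronger, so the stated inequality follows a fortiori; your explanation that the constants are ``weakened to accommodate the extra unit strip'' is therefore not the right reason---you simply have room to spare.

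One caution: expanding $\chi_P$ itself in a partial Fourier series in $x_1$ and substituting into the time integral is not directly justified, since that series does not converge pointwise. The clean fix is to first pass to the continuous function $f$ (whose Fourier series is absolutely convergent because $|\hat f(n)|=O(n^{-2})$) and then interchange---which is exactly what the paper does, and what your calculation amounts to once you identify $I_n=\hat f(n)$. Also watch the notation clash: you use $N$ both for the number of edges and for the integer time.
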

By switching the coordinates if necessary, we may assume that the slope of the orbits is greater than 1, therefore the assumption $0<\alpha_1<1$ is not restrictive. The proof will clearly show that if the second coordinate of $s$ is $0$ and $T \in \mathbb{N}$, then the estimate in Theorem \ref{theorem4} holds even without the first term $2$. Note that if there exists a constant $\eta<2$ such that the inequality $\| n \alpha_1 \|<|n|^{-\eta}$ has finitely many integer solutions $n \in \mathbb{Z}$, then $\sum_{n=1}^{\infty} 1/(n^2 \| n \alpha_1 \|)<\infty$.

The rest of this Section is devoted to the proofs of Theorems \ref{maintheorem} and \ref{theorem4}, both of which are based on Fourier analysis. We deduce Theorem \ref{theorem1} from Theorem \ref{maintheorem} and the subspace theorem of Schmidt in Section \ref{section3}.

\begin{proof}[Proof of Theorem \ref{maintheorem}] Throughout this proof the implied constants in the $O$-notation will only depend on $\alpha$ and the normal vectors of the facets of $P$. The error of replacing $s$ by $(\{s_1-\alpha_1 s_d\}, \dots , \{s_{d-1}-\alpha_{d-1} s_d\}, 0)$, and $T$ by $\lceil T \rceil$ in $\Delta_T(s, \alpha, P)$ is clearly $O(1)$, therefore we may assume $s_d=0$, and that $T$ is a positive integer. We start by reducing our $d$-dimensional, continuous time dynamical system to a $(d-1)$-dimensional, discrete time one. By breaking up the integral in the definition of $\Delta_T (s, \alpha, P)$ we get
\[ \Delta_T(s, \alpha, P) = \sum_{k=0}^{T-1} \left( \int_k^{k+1} \chi_P (\{ s_1+t \alpha_1 \}, \dots, \{ s_{d-1}+t \alpha_{d-1} \}, \{ t \} ) \, \mathrm{d}t - \lambda (P) \right) . \]
Applying the integral transformation $t \mapsto t+k$ we can write $\Delta_T (s, \alpha, P)$ in the form
\begin{equation}\label{discretedynsyst}
\Delta_T(s, \alpha, P) = \sum_{k=0}^{T-1} \left( f(s_1+ k \alpha_1, \dots, s_{d-1}+ k \alpha_{d-1}) - \lambda (P) \right) ,
\end{equation}
where $f: \mathbb{R}^{d-1} \to \mathbb{R}$ is defined as
\begin{equation}\label{fdefinition}
f(x_1, \dots, x_{d-1}) = \int_0^1 \chi_P (\{ x_1 + t \alpha_1 \} , \dots , \{ x_{d-1} + t \alpha_{d-1} \} , t) \, \mathrm{d}t .
\end{equation}
In the terminology of dynamical systems the facet $x_d=0$ of $[0,1]^d$ (which corresponds to a $(d-1)$-dimensional torus in $\mathbb{R}^d / \mathbb{Z}^d$) is a transversal, and the underlying discrete time dynamical system, the translation on $\mathbb{R}^{d-1} / \mathbb{Z}^{d-1}$ with direction $(\alpha_1, \dots, \alpha_{d-1})$ is a Poincar\'e map.

The geometric meaning of $f$ is the following. Consider the line segment starting at the point $(x_1, \dots, x_{d-1},0)$ parallel to $\alpha$, joining the facets $x_d=0$ and $x_d=1$ of $[0,1]^d$ (of course everything is taken modulo $\mathbb{Z}^d$, i.e.\ it is in fact a line segment on the torus). Then $f(x_1, \dots, x_{d-1})$ is the length of the intersection of this line segment with $P$. The crucial observation is that since $\alpha$ is not parallel to any facet of $P$, the function $f$ is continuous. This allows us to prove a nontrivial estimate for the Fourier coefficients of $f$ as follows.

\begin{lemma}\label{fourierlemma} There exists a set $\mathcal{L}$ of linearly independent linear forms $(L_1, \dots, L_{d-1})$ of $d-1$ variables with coefficients in $K$, depending only on $\alpha$ and the normal vectors of the facets of $P$, such that $|\mathcal{L}|=O(1)$ and for any $n \in \mathbb{Z}^{d-1}$, $n \neq 0$ we have
\[ \int_{[0,1]^{d-1}} f(x) e^{- 2 \pi i \langle n,x \rangle} \, \mathrm{d}x = O \left( \sum_{(L_1, \dots, L_{d-1}) \in \mathcal{L}} \frac{1}{|n| \prod_{k=1}^{d-1} \left( |L_k (n)|+1 \right)} \right) . \]
\end{lemma}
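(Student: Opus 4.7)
Plan: First I would rewrite the Fourier coefficient as a $d$-dimensional Fourier transform. Writing out the definition of $f$, interchanging the order of integration, and using the change of variable $y = x + t\alpha'$ with $\alpha' = (\alpha_1, \dots, \alpha_{d-1})$ (legitimate by periodicity), one obtains
$\hat{f}(n) = \hat{\chi}_P(\xi) := \int_P e^{-2\pi i \langle \xi, z\rangle}\, \mathrm{d}z$,
where $\xi = (n, -\langle n, \alpha' \rangle) \in \mathbb{R}^d$. The crucial feature, which I would highlight and exploit at the end, is that $\langle \xi, \alpha \rangle = \langle n, \alpha' \rangle - \langle n, \alpha' \rangle \cdot \alpha_d = 0$ because $\alpha_d = 1$.

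Next I would estimate $\hat{\chi}_P(\xi)$ by iterated integration by parts. For any $v \in \mathbb{R}^d$ with $\langle \xi, v\rangle \neq 0$, the divergence theorem yields $\hat{\chi}_P(\xi) = \frac{-1}{2\pi i \langle \xi, v\rangle} \sum_F \langle v, \nu_F\rangle \hat{\chi}_F(\xi)$, and one iterates this inside each facet, choosing at each step a direction tangent to the current face but transverse to its subfaces. After $d$ such steps one arrives at vertex contributions of modulus $O(1)$, and combining each step with the trivial volume bound via the elementary inequality $\min(A, B/|x|) \le (A+B)/(1+|x|)$ produces, after a routine induction on dimension,
$|\hat{\chi}_P(\xi)| \le C \sum_{\text{flag}} \prod_{k=0}^{d-1} \frac{1}{1 + |\langle \xi, v_k^{\text{flag}} \rangle|}$,
where the sum runs over a finite family of maximal flags $(P = F_0 \supset F_1 \supset \cdots \supset F_{d-1})$ equipped with direction tuples $(v_0, \dots, v_{d-1})$ depending only on $\alpha$ and the facet normals (and chosen so that each $v_k$ has coordinates in $K$, which is possible because the normals lie in $K^d$).

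In the third step I substitute back: for any $v \in K^d$, the pairing becomes $\langle \xi, v\rangle = L_v(n) := \sum_{j=1}^{d-1} n_j (v_j - \alpha_j v_d)$, a linear form in $n$ with coefficients in $K$. The map $v \mapsto L_v$ is surjective with kernel $\mathrm{span}(\alpha)$, so when $v_0, \dots, v_{d-1}$ are linearly independent in $\mathbb{R}^d$ (as the flag construction arranges), the $d$ forms $L_{v_0}, \dots, L_{v_{d-1}}$ span the full $(d-1)$-dimensional dual of $\mathbb{R}^{d-1}$. Thus some $(d-1)$-subset $S$ of indices gives a linearly independent tuple, and correspondingly $\max_{k \in S} |L_{v_k}(n)| \ge c|n|$ for some $c>0$. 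Letting $k^*(n) \in S$ achieve this maximum, for $|n| \ge 1/c$ we have $\prod_{k}(1 + |L_{v_k}(n)|) \ge c|n| \prod_{k \neq k^*}(1 + |L_{v_k}(n)|)$, and summing over the (at most $d-1$) possible values of $k^*$ bounds each flag contribution by $\sum_{k^* \in S} \frac{C}{|n| \prod_{k \neq k^*}(1 + |L_{v_k}(n)|)}$, where each $(L_{v_k})_{k \neq k^*}$ is a $(d-1)$-tuple of linearly independent forms in $K$. Taking $\mathcal{L}$ to be the union of all such tuples over the finite family of flags gives $|\mathcal{L}| = O(1)$ and the claimed estimate (the case $|n| < 1/c$ is absorbed by a larger implied constant).

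The main obstacle is the clean derivation of the $\prod(1 + |\langle \xi, v_k\rangle|)^{-1}$ bound in step two: at each level one must interleave a divergence identity (which can have small denominators) with the correct trivial bound on the corresponding face, and the $v_k$ must be chosen with coordinates in $K$ while remaining transverse to the relevant subfaces, which requires a mild generic-position argument. A secondary subtlety is that the tuple $(L_{v_k})_{k \neq k^*}$ is linearly independent precisely when the coefficient of $L_{v_{k^*}}$ in the unique relation $\sum_k c_k L_{v_k} = 0$ is nonzero; this can be ensured by selecting the $v_k$ so that $\alpha$ has only nonzero coordinates in the basis they form.
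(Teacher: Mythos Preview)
Your opening identity $\hat f(n)=\hat\chi_P(\xi)$ with $\xi=(n,-\langle n,\alpha'\rangle)\perp\alpha$ is correct and elegant, and it is \emph{not} how the paper proceeds. The paper stays in dimension $d-1$: it shows $f$ is piecewise linear on a polytopal decomposition $A_1,\dots,A_m$ of $[0,1]^{d-1}$ (with facet normals in $K$), applies the divergence theorem once to $\frac{n}{2\pi i|n|^2}f(x)e^{-2\pi i\langle n,x\rangle}$ on each $A_j$, and uses the \emph{continuity} of $f$ (this is where $\langle\nu,\alpha\rangle\neq0$ enters) to cancel all boundary terms, leaving $\hat f(n)=\sum_j\frac{\langle a_j,n\rangle}{2\pi i|n|^2}\int_{A_j}e^{-2\pi i\langle n,x\rangle}\,dx$. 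The $1/|n|$ factor is thus produced cleanly at the outset, and Randol's flag method is then applied in dimension $d-1$ to $\int_{A_j}e^{-2\pi i\langle n,x\rangle}\,dx$, yielding $d-1$ genuinely independent linear forms. Your route packages both steps into a single $d$-dimensional Randol estimate and then tries to split off the $1/|n|$ afterwards; this is conceptually cleaner but shifts the role of the hypothesis $\langle\nu,\alpha\rangle\neq0$ to the very last step.

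That last step, as written, has a gap. From your flag bound $\prod_{k=0}^{d-1}(1+|\langle\xi,v_k\rangle|)^{-1}$ you drop the index $k^*$ where $|L_{v_k}(n)|$ is maximal over a fixed independent subset $S$, and claim the remaining $(d-1)$-tuple $(L_{v_k})_{k\neq k^*}$ is independent. But this holds iff the coefficient $c_{k^*}$ in the unique relation $\sum_k c_k v_k\in\mathrm{span}(\alpha)$ is nonzero, and you have no control over $c_{k^*}$ for $k^*\in S$; your proposed remedy (``select the $v_k$ so that $\alpha$ has only nonzero coordinates'') is not justified, since the flag constraints pin down, e.g., $v_{d-1}$ to be an edge direction of $P$, which can very well be orthogonal to $\alpha$ (the hypothesis only prevents edges from being \emph{parallel} to $\alpha$). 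A concrete fix, staying within your framework: use the sharper projection form of Randol, $|\hat\chi_P(\xi)|=O\bigl(\sum_{\text{flags}}\prod_{\ell=1}^{d}(1+|\pi_\ell(\xi)|)^{-1}\bigr)$ with $\pi_\ell$ the orthogonal projection onto the direction of the $\ell$-face. Then $\pi_d(\xi)=\xi$ gives the factor $(1+|\xi|)^{-1}\le|n|^{-1}$ directly, while for $\ell\le d-1$ one bounds $|\pi_\ell(\xi)|\ge|\langle v_{\ell-1},\xi\rangle|/|v_{\ell-1}|$ with the Gram--Schmidt vectors; since $v_{d-1}$ is a facet normal, the hypothesis $\langle v_{d-1},\alpha\rangle\neq0$ gives $c_{d-1}\neq0$, so $(L_{v_0},\dots,L_{v_{d-2}})$ is automatically independent. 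This is the precise point at which $\langle\nu,\alpha\rangle\neq0$ enters your argument, mirroring its role (via continuity of $f$) in the paper's.
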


\begin{proof} We start by ``lifting'' the line segment in the definition of $f$ from $\mathbb{R}^d / \mathbb{Z}^d$ to $\mathbb{R}^d$. For a given $x \in [0,1]^{d-1}$ let $g_x(t)=(x_1+t \alpha_1, \dots , x_{d-1}+t\alpha_{d-1}, t)$, $t \in \mathbb{R}$ denote a parametrized line. Let $M$ be a positive integer such that $|\alpha_k| \le M$ for all $1 \le k \le d$. For any $x \in [0,1]^{d-1}$ the line segment $g_x(t)$, $t \in [0,1]$ stays in $[-M,M+1]^d$. Thus it is enough to consider the translations of $P$ by the integral vectors $\varepsilon$ in the set $E=[-M, M]^d \cap \mathbb{Z}^d$. Formally, for any $x \in [0,1]^{d-1}$ we have
\begin{equation}\label{P+epsilon}
f(x)= \sum_{\varepsilon \in E} \int_0^1 \chi_{P + \varepsilon} (g_x(t)) \, \mathrm{d}t .
\end{equation}

Note that $|E|=O(1)$. We claim that $f$ is a ``piecewise linear'' function. That is, there exists a decomposition of $[0,1]^{d-1}$ into polytopes $A_1, A_2, \dots, A_m$ such that $f$ is of the form $f(x)=\langle a_j, x \rangle + b_j$ on $A_j$ with some $a_j \in \mathbb{R}^{d-1}, b_j \in \mathbb{R}$.

Indeed, let $\pi : \mathbb{R}^d \to \mathbb{R}^{d-1}$ denote the projection onto the hyperplane $x_d=0$ in the direction $\alpha$, i.e.\ let $\pi (x_1, \dots, x_d)=(x_1-\alpha_1 x_d, x_2-\alpha_2 x_d, \dots, x_{d-1}-\alpha_{d-1} x_d)$. Consider the $(d-2)$-dimensional faces of all translates $P+\varepsilon$, $\varepsilon \in E$. Applying the projection $\pi$ to the affine hulls of these $(d-2)$-dimensional faces, we obtain affine hyperplanes in $\mathbb{R}^{d-1}$. These affine hyperplanes decompose $[0,1]^{d-1}$ into polytopes $A_1, \dots, A_m$. (The affine hyperplanes which do not intersect $[0,1]^{d-1}$ are discarded.) Observe that $m=O(1)$ and that each $A_j$ has $O(1)$ facets. More specifically, consider a $(d-2)$-dimensional face of one of the translates $P+\varepsilon$. The affine hull of this face is the set of solutions of the system $\langle \mu, x \rangle =b$, $\langle \nu , x \rangle =c$ for the normal vectors $\mu, \nu$ of two facets of $P$ and some $b,c \in \mathbb{R}$. The projection $\pi (x)=y$ satisfies
\[ \sum_{k=1}^{d-1} \left( \frac{\mu_k}{\langle \mu, \alpha \rangle} - \frac{\nu_k}{\langle \nu, \alpha \rangle} \right) y_k = \frac{b}{\langle \mu, \alpha \rangle}-\frac{c}{\langle \nu, \alpha \rangle}. \]
Here the coefficients of $y_k$ belong to the field $K$, and it is not difficult to check that they are not all zero. Hence the ($(d-2)$-dimensional) facets of the ($(d-1)$-dimensional) polytopes $A_1, \dots, A_m$ have normal vectors with coefficients in $K$.

For a given $x \in [0,1]^{d-1}$ the intersection of the line segment $g_x(t)$, $t \in [0,1]$ and the polytopes $P+\varepsilon$, $\varepsilon \in E$ is the union of finitely many (possibly zero) line segments with endpoints on the facets of $P+\varepsilon$, $\varepsilon \in E$. Observe that given an $A_j$, the ordered list of facets of $P+\varepsilon$, $\varepsilon \in E$ intersecting $g_x(t)$, $t \in [0,1]$ does not depend on the choice of the point $x \in A_j$.

Fix an $A_j$, and let $x \in A_j$. Consider two facets of $P+\varepsilon$, $\varepsilon \in E$ whose affine hulls have equations $\langle \mu,y \rangle = b$ and $\langle \nu, y \rangle =c$ with normal vectors $\mu, \nu$ and some $b,c \in \mathbb{R}$. The points of the line $g_x(t)$ that lie on these affine hyperplanes satisfy
\begin{equation}\label{ajformula}
t = \frac{b}{\langle \mu, \alpha \rangle} - \sum_{k=1}^{d-1} \frac{\mu_k}{\langle \mu, \alpha \rangle} x_k , \qquad t= \frac{c}{\langle \nu, \alpha \rangle} - \sum_{k=1}^{d-1} \frac{\nu_k}{\langle \nu, \alpha \rangle} x_k,
\end{equation}
respectively. Therefore the length of the line segment on $g_x(t)$ that lies between the two given facets is an inhomogeneous linear function of $x$. Observe also that the coefficients of $x_1, \dots, x_{d-1}$ in this inhomogeneous linear function are $O(1)$. From \eqref{P+epsilon} we thus obtain that $f(x)$ is indeed of the form $f(x)=\langle a_j, x \rangle + b_j$ on $A_j$ with some $a_j \in \mathbb{R}^{d-1}$ and $b_j \in \mathbb{R}$, moreover $|a_j|=O(1)$.

We are interested in the integral of $f(x)e^{-2 \pi i \langle n,x \rangle}$, i.e.\ the product of an inhomogeneous linear, and an exponential function. It is therefore natural to use the divergence theorem, which is basically a multidimensional analogue of integration by parts. The key fact is that the continuity of $f$ (which follows from the assumption that $\alpha$ is not parallel to any facet of $P$) implies that the integrals over the boundaries in the divergence theorem \textit{completely cancel}. The appearance of the extra factor $|n|$ in the denominator in Lemma \ref{fourierlemma}, and hence the boundedness of $\Delta_T(s, \alpha, P)$ is a consequence of this cancellation in the divergence theorem.

From now on let $n \in \mathbb{Z}^{d-1}$, $n \neq 0$ be fixed. For a given $1 \le j \le m$ let us apply the divergence theorem to the function $F: A_j \to \mathbb{R}^{d-1}$,
\[ F(x)=\frac{n}{2 \pi i |n|^2} f(x) e^{-2 \pi i \langle n, x \rangle} = \frac{n}{2 \pi i |n|^2} \left( \langle a_j, x \rangle + b_j \right) e^{-2 \pi i \langle n, x \rangle} \]
to obtain
\begin{equation}\label{divtheorem}
\int_{A_j} \left(\frac{\langle a_j, n \rangle}{2 \pi i |n|^2} e^{-2 \pi i \langle n, x \rangle} -f(x) e^{-2 \pi i \langle n, x \rangle} \right) \, \textrm{d} x = \int_{\partial A_j} \frac{\langle n, \nu (x) \rangle}{2 \pi i |n|^2} f(x) e^{-2 \pi i \langle n, x \rangle} \, \textrm{d} x.
\end{equation}
Here $\partial A_j$ denotes the boundary of $A_j$, i.e.\ the union of its facets, and $\nu : \partial A_j \to \mathbb{R}^{d-1}$ is the outer unit normal vector. Since $f(x)$, and hence $f(x)e^{-2 \pi i \langle n, x \rangle}$ is periodic modulo $\mathbb{Z}^{d-1}$ and continuous, the sum of the right hand side of \eqref{divtheorem} over $1 \le j \le m$ is zero. Indeed, each facet appears twice in the sum, with the same integrand except with opposite signs because the outer normals are negatives of each other. Therefore summing \eqref{divtheorem} over $1 \le j \le m$ we obtain
\begin{equation}\label{sumAj}
\int_{[0,1]^{d-1}} f(x) e^{- 2 \pi i \langle n,x \rangle} \, \textrm{d}x = \sum_{j=1}^m \frac{\langle a_j, n \rangle}{2 \pi i |n|^2} \int_{A_j} e^{-2 \pi i \langle n,x \rangle} \, \textrm{d} x.
\end{equation}

The sum has $m=O(1)$ terms, thus it is enough to estimate the terms separately. Let $A=A_j \subseteq [0,1]^{d-1}$ for some $1 \le j \le m$. We follow the methods of Randol \cite{Randol} to bound the Fourier transform of the characteristic function of the polytope $A$. An ordered tuple $\mathcal{F}=(F_{d-1}, F_{d-2}, \dots, F_k)$ is called a flag of $A$ if $0 \le k \le d-1$, $F_{\ell}$ is an $\ell$-dimensional face of $A$ for every $k \le \ell \le d-1$, and $F_{d-1} \supset F_{d-2} \supset \cdots \supset F_k$. (Note $F_{d-1}=A$.) We call $\mathcal{F}$ a complete flag if $k=0$. Recall that $A$ has $O(1)$ facets, therefore the number of flags of $A$ is also $O(1)$.

To every given flag $\mathcal{F}=(F_{d-1}, F_{d-2}, \dots, F_k)$ let us associate orthogonal vectors $v_{d-2}, v_{d-3}, \dots, v_k$ such that $v_{\ell} \in \mathbb{R}^{d-1}$ is an outer normal vector of $F_{\ell}$ in the affine hull of $F_{\ell+1}$ for every $k \le \ell \le d-2$. Note that $v_{d-2}, \dots, v_k$ can be obtained by applying the Gram--Schmidt orthogonalization procedure to the normal vectors of certain facets of $A$, therefore we can also ensure that the coordinates of $v_{d-2}, \dots, v_k$ are all in $K$ (but the vectors might not have unit length). For every $k \le \ell \le d-1$ let $\pi_{\ell} : \mathbb{R}^{d-1} \to \mathbb{R}^{d-1}$ denote the orthogonal projection onto the $\ell$-dimensional linear subspace (i.e.\ containing the origin) parallel to $F_{\ell}$. In particular, for a complete flag we obtain an orthogonal basis $v_{d-2}, \dots, v_0$ of $\mathbb{R}^{d-1}$, defining linearly independent linear forms $L_1(x)=\langle v_{d-2}, x \rangle, \dots, L_{d-1}(x)= \langle v_0, x \rangle$ of the variables $x=(x_1, \dots, x_{d-1})$ with coefficients in $K$. Let $\mathcal{A}=\mathcal{A}_j$ denote the set of such linearly independent linear forms $(L_1, \dots, L_{d-1})$ associated to complete flags of $A=A_j$.

Clearly $|n|=|\pi_{d-1} (n)| \ge |\pi_{d-2}(n)| \ge \cdots \ge |\pi_k (n)|$. Let us call $\mathcal{F}$ a ``relevant flag'' if $|\pi_k (n)| < 1$ but $|\pi_{k+1}(n)| \ge 1$. We will express $\int_A e^{-2 \pi i \langle n, x \rangle} \, \textrm{d}x$ as a sum over all relevant flags of $A$. Formally, our integral is associated to the only flag of length 1, namely $(F_{d-1})$, which is not a relevant flag.

We use the following algorithm. Let us apply the divergence theorem to $F(x)=\frac{-n}{2 \pi i |n|^2} e^{-2 \pi i \langle n,x \rangle}$ on $A$. The integral over $\partial A$ can be written as a sum over all flags $(F_{d-1}, F_{d-2})$ of length 2, with terms
\begin{multline*}
\int_{F_{d-2}} \frac{-\langle v_{d-2}, n \rangle}{2 \pi i |v_{d-2}| |n|^2} e^{-2 \pi i \langle n,x \rangle} \, \textrm{d} x = \\ \frac{-\langle v_{d-2}, n \rangle}{2 \pi i |v_{d-2}| |n|^2} e^{-2 \pi i \langle n, w_{d-2} \rangle} \int_{\pi_{d-2}(F_{d-2})} e^{-2 \pi i \langle \pi_{d-2} (n) , x \rangle} \, \textrm{d} x.
\end{multline*}
Here $w_{d-2} \in \mathbb{R}^{d-1}$ is the vector for which $\pi_{d-2} (F_{d-2})+w_{d-2}=F_{d-2}$. The linear subspace containing $\pi_{d-2} (F_{d-2})$ can be isometrically identified with $\mathbb{R}^{d-2}$, thus $\langle \pi_{d-2}(n), x \rangle$ is preserved in this identification. This way we obtain
\[ \int_A e^{-2 \pi i \langle n, x \rangle} \, \textrm{d}x = \sum_{(F_{d-1}, F_{d-2})} C_n(F_{d-1}, F_{d-2}) \int_{\pi_{d-2} (F_{d-2})} e^{-2 \pi i \langle \pi_{d-2} (n), x \rangle} \, \textrm{d}x  \]
with some coefficients $|C_n (F_{d-1}, F_{d-2})| \le \frac{1}{2 \pi |\pi_{d-1}(n)|}$ (recall $\pi_{d-1}(n)=n$).

The terms indexed by relevant flags $(F_{d-1}, F_{d-2})$ are kept as they are. (Since $|\pi_{d-2}(n)|<1$, it is not worth applying the divergence theorem again.) If a term is indexed by a non-relevant flag $(F_{d-1}, F_{d-2})$, we apply the divergence theorem again and replace it by a sum over all extensions $(F_{d-1}, F_{d-2}, F_{d-3})$. We continue in a similar fashion: if a flag $(F_{d-1}, F_{d-2}, \dots, F_k)$ becomes relevant, we keep the corresponding term. If a flag is not relevant, we apply the divergence theorem again. The algorithm stops when every term in our sum is associated to a relevant flag. Note that since $|\pi_0 (n)|=0$, eventually every flag becomes relevant, and so the algorithm terminates. The algorithm yields a formula of the form
\begin{equation}\label{relevantflags}
\int_A e^{-2 \pi i \langle n, x \rangle} \, \textrm{d}x = \sum_{\substack{(F_{d-1}, F_{d-2}, \dots, F_k) \\ \textrm{relevant flags}}} C_n(F_{d-1}, F_{d-2}, \dots, F_k) \int_{\pi_k (F_k)} e^{-2 \pi i \langle \pi_k (n), x \rangle} \, \textrm{d}x
\end{equation}
with some coefficients $|C_n(F_{d-1}, F_{d-2}, \dots, F_k)| \le \prod_{\ell=k+1}^{d-1} \frac{1}{2 \pi |\pi_{\ell} (n)|}$.

Consider a relevant flag $(F_{d-1}, F_{d-2}, \dots, F_k)$. The corresponding integral on the right hand side of \eqref{relevantflags} is $O(1)$. If $k>0$, let us extend the relevant flag arbitrarily to a complete flag $(F_{d-1}, F_{d-2}, \dots, F_0)$. By the definition of a relevant flag we have $1>|\pi_k (n)| \ge |\pi_{k-1}(n)| \ge \cdots \ge |\pi_1 (n)|$, therefore $C_n (F_{d-1}, F_{d-2}, \dots, F_k) = O(1/\prod_{\ell=1}^{d-1} (|\pi_{\ell}(n)|+1))$. Clearly $|\pi_{\ell}(n)| \ge |\langle v_{\ell-1}, n \rangle| / |v_{\ell-1}|$ for every $1 \le \ell \le d-1$, hence we obtain the estimate
\[ \int_A e^{-2 \pi i \langle n, x \rangle} \, \textrm{d}x = O \left( \sum_{(L_1, \dots, L_{d-1}) \in \mathcal{A}} \frac{1}{\prod_{\ell=1}^{d-1} (|L_{\ell}(n)|+1)} \right) . \]
This holds for every $A=A_j$, therefore in light of \eqref{sumAj} $\mathcal{L}=\bigcup_{j=1}^m \mathcal{A}_j$ satisfies the claim of Lemma \ref{fourierlemma}.
\end{proof}

Note that for any linearly independent linear forms $L_1, \dots, L_{d-1}$ of $d-1$ variables we have
\[ \sum_{\substack{n \in \mathbb{Z}^{d-1} \\ n \neq 0}} \frac{1}{|n| \prod_{k=1}^{d-1} (|L_k(n)|+1)} < \infty . \]
Lemma \ref{fourierlemma} thus implies, in particular, that the Fourier series of $f$ is absolutely convergent. It follows (see e.g.\ \cite[Proposition 3.2.5.]{Grafakos}) that the Fourier series converges pointwise to $f$, i.e.\ $f(x)=\sum_{n \in \mathbb{Z}^{d-1}} \hat{f}(n) e^{2 \pi i \langle n,x \rangle}$ for every $x \in \mathbb{R}^{d-1}$, where $\hat{f}(n)=\int_{[0,1]^{d-1}} f(x)e^{-2\pi i \langle n,x \rangle} \, \mathrm{d}x$. It is not difficult to see from Fubini's theorem that
\[ \hat{f}(0)=\int_{[0,1]^{d-1}} f(x) \, \mathrm{d}x = \lambda (P). \]
Replacing $f$ by its Fourier series in \eqref{discretedynsyst}, and switching the order of summation we thus obtain with $s^*=(s_1, \dots, s_{d-1})$ and $\alpha^*=(\alpha_1, \dots, \alpha_{d-1})$ that
\[ \Delta_T (s, \alpha, P)=\sum_{k=0}^{T-1} \sum_{\substack{n \in \mathbb{Z}^{d-1}\\ n \neq 0}} \hat{f}(n) e^{2 \pi i \langle n, s^* +k \alpha^* \rangle} = \sum_{\substack{n \in \mathbb{Z}^{d-1}\\ n \neq 0}} \hat{f}(n) e^{2 \pi i \langle n, s^* \rangle} \frac{1-e^{2 \pi i \langle n, \alpha^* \rangle T}}{1-e^{2 \pi i \langle n, \alpha^* \rangle}} . \]
Using the general estimate $|1-e^{2 \pi i z}|=2|\sin (\pi z)| \ge 4 \left\| z \right\|$, $z \in \mathbb{R}$, we get
\begin{equation}\label{fourierbound}
|\Delta_T (s, \alpha, P)| \le \sum_{\substack{n \in \mathbb{Z}^{d-1}\\ n \neq 0}} |\hat{f}(n)| \cdot \frac{1}{2 \left\| n_1 \alpha_1 + \cdots + n_{d-1} \alpha_{d-1} \right\|} .
\end{equation}
In light of Lemma \ref{fourierlemma} it is thus enough to prove that for any linearly independent linear forms $L_1, \dots, L_{d-1}$ of $d-1$ variables with coefficients in $K$ we have
\begin{equation}\label{enoughtosee}
\sum_{\substack{n \in \mathbb{Z}^{d-1}\\ n \neq 0}} \frac{1}{|n| \prod_{k=1}^{d-1} (|L_k (n)|+1) \left\| n_1 \alpha_1 + \cdots + n_{d-1} \alpha_{d-1} \right\|} < \infty .
\end{equation}

We know that $\left\| n_1 \alpha_1 + \cdots + n_{d-1} \alpha_{d-1} \right\| \prod_{k=1}^{d-1} (|L_k (n)|+1) \ge C |n|^{-\gamma}$ for every $n \in \mathbb{Z}^{d-1}$, $n \neq 0$ with some constants $C>0$ and $\gamma <1$. For any integers $\ell_1, \dots, \ell_{d-1} \ge 0$ and $\ell \ge 0$ let $S_{\ell} (\ell_1, \dots, \ell_{d-1})$ denote the set of all $n \in \mathbb{Z}^{d-1}$, $n \neq 0$ such that $2^{\ell} \le |n| < 2^{\ell +1}$ and $2^{\ell_k} \le |L_k(n)|+1 < 2^{\ell_k +1}$ for all $1 \le k \le d-1$. Let $g: S_{\ell}(\ell_1, \dots, \ell_{d-1}) \to (-1/2, 1/2]$ be the function $g(n)=n_1 \alpha_1 + \cdots +n_{d-1} \alpha_{d-1} \pmod{1}$.

Let $H=\lceil C^{-1} 2^{(\ell_1+2) + \cdots + (\ell_{d-1}+2)} 2^{\gamma (\ell+2)} \rceil$. For every $n \in S_{\ell}(\ell_1, \dots, \ell_{d-1})$ we have
\[ |g(n)| = \left\| n_1 \alpha_1 + \cdots + n_{d-1} \alpha_{d-1} \right\| \ge \frac{1}{H} . \]
Moreover, for any $n,m \in S_{\ell}(\ell_1, \dots, \ell_{d-1})$, $n \neq m$ we have $|L_k (n-m)|+1 \le |L_k (n)| + |L_k (m)| +1 < 2^{\ell_k+2}$ for every $k$ and $|n-m| < 2^{\ell+2}$, and hence
\[ |g(n)-g(m)| \ge \left\| (n_1-m_1) \alpha_1 + \cdots + (n_{d-1}-m_{d-1}) \alpha_{d-1}  \right\| > \frac{1}{H} . \]
In other words, $g(n) \not\in (-1/H,1/H)$ for any $n$, and every interval of the form $[h/H, (h+1)/H)$ and $(-(h+1)/H,-h/H]$, $h \ge 1$ contains $g(n)$ for at most one $n$. Since $|g(n)| \le 1/2$, we therefore obtain
\[ \begin{split} \sum_{n \in S_{\ell} (\ell_1, \dots, \ell_{d-1})} \frac{1}{|g(n)|} \le 2 \sum_{1 \le h \le H/2} \frac{H}{h} &= O(H \log H) \\ &=O \left( 2^{\ell_1 + \cdots + \ell_{d-1}} 2^{\gamma \ell} (\ell_1 + \cdots + \ell_{d-1} + \ell) \right) , \end{split}\]
and consequently
\begin{multline*}
\sum_{n \in S_{\ell} (\ell_1, \dots, \ell_{d-1})} \frac{1}{|n| \prod_{k=1}^{d-1} (|L_k (n)|+1) \left\| n_1 \alpha_1 + \cdots + n_{d-1} \alpha_{d-1} \right\|} \\ = O \left( 2^{(\gamma -1)\ell} (\ell_1 + \cdots + \ell_{d-1} + \ell) \right) .
\end{multline*}

Note that $|L_k (n)|+1=O(|n|)$ shows $\ell_k = O(\ell)$, unless $S_{\ell} (\ell_1, \dots, \ell_{d-1})$ is empty. Summing over $0 \le \ell_1, \dots, \ell_{d-1} = O(\ell)$ we get
\[ \sum_{2^{\ell} \le |n| <2^{\ell+1}} \frac{1}{|n| \prod_{k=1}^{d-1} (|L_k (n)|+1) \left\| n_1 \alpha_1 + \cdots + n_{d-1} \alpha_{d-1} \right\|} = O \left( 2^{(\gamma -1)\ell} \ell^d \right) . \]
Finally, summing over $\ell \ge 0$ shows that \eqref{enoughtosee} indeed holds. The proof of Theorem \ref{maintheorem} is thus complete.
\end{proof}

\begin{proof}[Proof of Theorem \ref{theorem4}] We use the notation and follow the proof of Theorem \ref{maintheorem}. From the definition \eqref{DeltaT} of $\Delta_T (s, \alpha, P)$ it is easy to deduce that
\[ \begin{split} |\Delta_T (s, \alpha, P) - \Delta_{T+s_2} ((\{ s_1-\alpha_1 s_2\} ,0), \alpha, P)| &\le 1, \\ |\Delta_T (s, \alpha, P) - \Delta_{\lceil T \rceil} (s, \alpha, P)| &\le 1 . \end{split} \]
In other words, the error of replacing $s_2$ by $0$, and $T$ by $\lceil T \rceil$ is at most $2$. From now on we will assume $s_2=0$ and that $T \in \mathbb{N}$, and will prove the estimate in the claim without the first term $2$.

Let $f: \mathbb{R} \to \mathbb{R}$ be as in \eqref{fdefinition}, and for any $x \in [0,1]$ let $g_x(t)=(x+t\alpha_1, t)$, as before. Since $0<\alpha_1<1$, the line segment $g_x(t)$, $t \in [0,1]$ stays in $[0,2] \times [0,1]$, and so it can only intersect the translates $P$ and $P+(1,0)$. That is, for any $x \in [0,1]$
\begin{equation}\label{P,P+(1,0)}
f(x)=\int_0^1 \chi_P (g_x(t)) \, \mathrm{d}t + \int_0^1 \chi_{P+(1,0)} (g_x(t)) \, \mathrm{d}t .
\end{equation}
Again, $f$ is a piecewise linear function. Indeed, by applying a projection in the direction $\alpha$, that is, the map $\pi : \mathbb{R}^2 \to \mathbb{R}$, $\pi (x_1,x_2)=x_1-\alpha_1 x_2$ to the vertices of $P$ and $P+(1,0)$, we obtain a partition $0=c_0<c_1<\cdots<c_m=1$ of the interval $[0,1]$. (The projections outside $[0,1]$ are discarded.) Note that $m \le N+1$ since a pair of corresponding vertices of $P$ and $P+(1,0)$ have projections at distance $1$ from each other. For a given $1 \le j \le m$, as $x$ runs in $[c_{j-1},c_j]$ the line segment $g_x(t)$, $t \in [0,1]$ either does not intersect $P$, or intersects the same pair of edges $e_k, e_{\ell}$ of $P$ with some $1 \le k<\ell \le N$ depending on $j$. Thus the first term in \eqref{P,P+(1,0)} is of the form $a_j'x+b_j'$ on $[c_{j-1},c_j]$. As observed in \eqref{ajformula}, either $a_j'=0$ or
\[ |a_j'| = |\alpha| \left| \frac{\nu_{k,1}}{\langle \nu_k, \alpha \rangle} - \frac{\nu_{\ell,1}}{\langle \nu_{\ell}, \alpha \rangle} \right| = |\alpha| \left| \frac{\nu_{k,1} \nu_{\ell,2} -\nu_{k,2} \nu_{\ell,1} }{\langle \nu_k, \alpha \rangle \cdot \langle \nu_{\ell}, \alpha \rangle} \right| ,\]
where $\nu_k=(\nu_{k,1}, \nu_{k,2}), \nu_{\ell}=(\nu_{\ell,1},\nu_{\ell,2})$ are normal vectors of $e_k$, $e_{\ell}$, respectively. Using the angles $\phi_k, \phi_{\ell}$ in the latter case we have
\[ |a_j'|= |\alpha| \frac{|\nu_k| \cdot |\nu_{\ell}| \cdot |\sin (\phi_k - \phi_{\ell})|}{|\nu_k| \cdot |\alpha| \cdot |\cos \left( \frac{\pi}{2}-\phi_k \right)| \cdot |\nu_{\ell}| \cdot |\alpha| \cdot |\cos \left( \frac{\pi}{2}-\phi_{\ell} \right)|} = \frac{\left| \cot \phi_k - \cot \phi_{\ell} \right|}{|\alpha|} . \]
Note that although the angles formed by $\alpha$, $\nu_k$ and $\nu_{\ell}$ are not well-defined functions of $\phi_k, \phi_{\ell}$, the absolute value of the trigonometric functions in the formula above are well-defined. Similarly, the second term in \eqref{P,P+(1,0)} is of the form $a_j''x+b_j''$ on $[c_{j-1},c_j]$ with either $a_j''=0$ or $|a_j''|=|\cot \phi_p- \cot \phi_q|/|\alpha|$ with some $1 \le p<q\le N$ depending on $j$.

Thus $f(x)$ is of the form $f(x)=a_jx+b_j$ on $[c_{j-1},c_j]$, where $a_j=a_j'+a_j''$. Consider the Fourier coefficients $\hat{f}(n)=\int_0^1 f(x)e^{-2 \pi i n x} \, \mathrm{d}x$, $n \in \mathbb{Z}$. It is easy to see from Fubini's theorem that $\hat{f}(0)=\lambda (P)$. For $n \neq 0$ we can apply integration by parts to obtain
\[ \begin{split} \hat{f}(n) &= \sum_{j=1}^m \left( \int_{c_{j-1}}^{c_j} (a_jx+b_j)e^{-2 \pi i nx} \, \mathrm{d}x \right) \\ &= \sum_{j=1}^m \left( f(c_j)\frac{e^{-2 \pi i nc_j}}{-2 \pi i n}-f(c_{j-1}) \frac{e^{-2 \pi i nc_{j-1}}}{-2 \pi i n} \right) -\sum_{j=1}^m a_j\int_{c_{j-1}}^{c_j} \frac{e^{-2 \pi i n x}}{-2 \pi i n} \, \mathrm{d}x . \end{split} \]
Here the first sum is $0$ because $f$ is continuous and $1$-periodic, hence
\[ |\hat{f}(n)| \le \sum_{j=1}^m \frac{|a_j'| + |a_j''|}{2 \pi^2 n^2} \le \frac{N+1}{\pi^2 |\alpha| n^2} \max_{1 \le k<\ell \le N} \left| \cot \phi_k - \cot \phi_{\ell} \right| . \]
From \eqref{fourierbound} we finally deduce
\[ |\Delta_T (s, \alpha, P)| \le \sum_{\substack{n \in \mathbb{Z} \\ n \neq 0}} \frac{|\hat{f}(n)|}{2 \| n \alpha_1 \|} \le \frac{N+1}{\pi^2 |\alpha |} \max_{1 \le k<\ell \le N} \left| \cot \phi_k - \cot \phi_{\ell} \right| \sum_{n=1}^{\infty} \frac{1}{n^2 \| n \alpha_1 \|} . \]
\end{proof}

\section{The proof of Theorem \ref{theorem1}}\label{section3}

We now prove Theorem \ref{theorem1}. By applying a simple integral transformation in the definition \eqref{DeltaT} of $\Delta_T(s, \alpha, P)$, we may assume $\alpha_d=1$. Choosing $K$ to be the field of algebraic reals, it is thus enough to show that if $\alpha_1, \ldots, \alpha_{d-1},1$ are algebraic and linearly independent over $\mathbb{Q}$, then $\alpha$ satisfies the Diophantine condition of Theorem \ref{maintheorem}. The celebrated subspace theorem of Schmidt \cite{Schmidt} shows that this Diophantine condition is in fact satisfied with any $\gamma >0$. In other words, we do not even need the full power of the subspace theorem. Unfortunately, most monographs on simultaneous Diophantine approximation prove this condition only for the linear forms $L_1(x)=x_1, \dots, L_{d-1}(x)=x_{d-1}$. For the sake of completeness, we include a proof for arbitrary linearly independent linear forms with real algebraic coefficients. Nevertheless, the following theorem can still be considered to be a form of the subspace theorem of Schmidt.

\begin{thm}[Schmidt] Let $d \ge 2$, and let the algebraic reals $\alpha_1, \dots, \alpha_{d-1}, 1$ be linearly independent over $\mathbb{Q}$. Let $L_1, \dots, L_{d-1}$ be linearly independent linear forms of $d-1$ variables with real algebraic coefficients. For any $\varepsilon >0$ the inequality
\begin{equation}\label{Schmidtbound}
\left\| \alpha_1 n_1 + \cdots + \alpha_{d-1} n_{d-1} \right\| \cdot \prod_{k=1}^{d-1} \left( |L_k (n)|+1 \right) < |n|^{-\varepsilon}
\end{equation}
has finitely many integer solutions $n \in \mathbb{Z}^{d-1}$.
\end{thm}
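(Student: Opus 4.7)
The plan is to reduce to the classical multiplicative form of Schmidt's subspace theorem in $d$ variables (valid for arbitrary linearly independent linear forms with real algebraic coefficients, as in Schmidt's monograph), and then iterate in lower dimensions. For each $n \in \mathbb{Z}^{d-1} \setminus \{0\}$, I let $n_d$ be the nearest integer to $\alpha_1 n_1 + \cdots + \alpha_{d-1} n_{d-1}$, set $\tilde n = (n, n_d) \in \mathbb{Z}^d$, and extend the given forms to $d$ forms on $\mathbb{R}^d$ by $L'_k(x_1, \ldots, x_d) = L_k(x_1, \ldots, x_{d-1})$ for $1 \le k \le d-1$ and $L'_d(x_1, \ldots, x_d) = \alpha_1 x_1 + \cdots + \alpha_{d-1} x_{d-1} - x_d$. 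The coefficient matrix is block triangular with determinant $\pm\det(L_1, \ldots, L_{d-1}) \ne 0$, so these are $d$ linearly independent algebraic forms, and $|L'_d(\tilde n)| = \|\alpha_1 n_1 + \cdots + \alpha_{d-1} n_{d-1}\|$. Since $|L'_k(\tilde n)| \le |L'_k(\tilde n)|+1$ and $|\tilde n| \asymp |n|$, the hypothesis \eqref{Schmidtbound} gives $\prod_{k=1}^d |L'_k(\tilde n)| < |\tilde n|^{-\varepsilon/2}$ for all $|n|$ large.

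Schmidt's subspace theorem then confines all such $\tilde n$ (with $L'_k(\tilde n) \ne 0$ for every $k$) to finitely many proper $\mathbb{Q}$-subspaces of $\mathbb{Q}^d$. The degenerate cases are easy: $L'_d(\tilde n) = 0$ forces $n = 0$ by the linear independence of $\alpha_1, \ldots, \alpha_{d-1}, 1$ over $\mathbb{Q}$, while $L'_k(\tilde n) = 0$ for some $k \le d-1$ confines $n$ to one of finitely many proper $\mathbb{Q}$-subspaces of $\mathbb{Q}^{d-1}$, handled below by induction. It then suffices to show that in each proper subspace of $\mathbb{Q}^d$ only finitely many admissible $\tilde n$ satisfy the inequality; without loss of generality such a subspace is a hyperplane $c_1 x_1 + \cdots + c_d x_d = 0$ with $c \in \mathbb{Z}^d \setminus \{0\}$.

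If $c_d = 0$, the hyperplane condition is a nontrivial $\mathbb{Q}$-linear relation on $n$ alone, so $n$ already lies in a proper subspace of $\mathbb{Q}^{d-1}$. If $c_d \ne 0$, then $n_d = -\sum_k c_k n_k / c_d$ is determined rationally by $n$, and matching this with $n_d$ being the nearest integer to $\alpha \cdot n$ yields $\|\alpha \cdot n\| = |\beta_1 n_1 + \cdots + \beta_{d-1} n_{d-1}|$, where $\beta_k = \alpha_k + c_k/c_d$ is algebraic and $\beta_1, \ldots, \beta_{d-1}, 1$ remain linearly independent over $\mathbb{Q}$ (a direct check using the corresponding property of $\alpha$). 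The inequality in the subspace then reads $|\beta \cdot n| \prod_{k=1}^{d-1}(|L_k(n)|+1) < |n|^{-\varepsilon}$; since $\beta$ is a nonzero real combination of the coefficient vectors of $L_1, \ldots, L_{d-1}$, I can replace some $L_j$ (with nonzero coefficient) by $\beta$ to obtain $d-1$ linearly independent algebraic forms in $d-1$ variables. Dropping the factor $|L_j(n)|+1 \ge 1$ and applying Schmidt's subspace theorem a second time (now in dimension $d-1$) places $n$ in finitely many proper $\mathbb{Q}$-subspaces of $\mathbb{Q}^{d-1}$. Either way, the problem reduces to the inequality restricted to a proper subspace of $\mathbb{Q}^{d-1}$.

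I close by induction on $d$. The base case $d = 2$ is immediate from Roth's theorem: the single-form inequality $\|n\alpha_1\|(|cn|+1) < |n|^{-\varepsilon}$ with $c \ne 0$ algebraic forces $\|n\alpha_1\| < C|n|^{-1-\varepsilon/2}$, which has finitely many integer solutions. For the inductive step, within each proper subspace of $\mathbb{Q}^{d-1}$ I parameterize the integer points by $d-2$ integer variables $t$ and observe that the restricted inequality takes the same form with new algebraic constants $\alpha'_1, \ldots, \alpha'_{d-2}$ still linearly independent with $1$ over $\mathbb{Q}$ and $d-1$ algebraic linear forms in $t$; choosing a maximal $(d-2)$-element linearly independent subset and dropping the remaining factor (of size $O(|t|)$, absorbed into a slightly smaller $\varepsilon$) gives an instance of the theorem in one fewer variable. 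The main obstacle I expect is the case $c_d \ne 0$ of the subspace analysis, where the natural projection onto $n$-coordinates is a bijection so dimension does not drop automatically; the fix is the second application of Schmidt, and the delicate point is the linear algebra verifying that $\beta$ can be swapped for some $L_j$ without destroying independence.
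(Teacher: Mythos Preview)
Your argument is correct and genuinely different from the paper's. The paper invokes Schmidt's ``general Roth system'' criterion from \cite{Schmidt}: with $M_0(x)=x_0+\alpha_1 x_1+\cdots+\alpha_{d-1}x_{d-1}$ and $M_k=L_k$ for $k\ge 1$, and exponents $c_0=-1$, $c_1,\dots,c_{d-1}\ge 0$ summing to $1$, the condition $c(S)\le 0$ holds for every rational subspace because $M_0$ never vanishes on a nonzero rational vector, forcing $k_1=0$ in the definition of $c(S)$. A short dyadic bookkeeping with exponents of the form $j/p$ then converts the Roth-system conclusion into the stated finiteness. This is a single application of a stronger (but less commonly quoted) form of the subspace theorem, followed by an elementary exponent-matching step.

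Your route uses only the classical product form of the subspace theorem (plus Roth for the base case), at the cost of an iterative structure: one application in $\mathbb{Z}^d$ to the lifts $\tilde n$, a case split on whether the exceptional hyperplane involves $x_d$, a second application in $\mathbb{Z}^{d-1}$ in the $c_d\neq 0$ case after rewriting $\|\alpha\cdot n\|=|\beta\cdot n|$ and swapping $\beta$ for a suitable $L_j$, and finally an induction on $d$ inside each proper rational subspace of $\mathbb{Q}^{d-1}$. Two minor remarks: in the degenerate case $L_k(\tilde n)=0$ the kernel need not be a rational hyperplane (the coefficients of $L_k$ are merely algebraic), but the integer solutions still span a proper rational subspace of $\mathbb{Q}^{d-1}$, which is all you need; and in the inductive step the extra factors $|L_k(n)|+1\ge 1$ can simply be dropped with no loss in $\varepsilon$, so no absorption is needed. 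What you gain is a proof resting only on the textbook statement of the subspace theorem; what the paper gains is a one-shot argument with no recursion or case analysis, but it requires the reader to know the Roth-system formulation and its $c(S)$ criterion.
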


\begin{proof} We derive the theorem from two different versions of Schmidt's subspace theorem. First, a special case of the subspace theorem \cite[Corollary 1]{Schmidt} says that for any $\varepsilon >0$ the inequality $\left\| \alpha_1 n_1 + \cdots + \alpha_{d-1} n_{d-1} \right\| < |n|^{-(d-1)-\varepsilon}$ has finitely many integer solutions $n \in \mathbb{Z}^{d-1}$. Therefore it will be enough to consider $n \in \mathbb{Z}^{d-1}$ such that, say, $\left\| \alpha_1 n_1 + \dots + \alpha_{d-1} n_{d-1} \right\| \ge |n|^{-d}$.

Let $c_0 \le c_1 \le \dots \le c_{d-1}$ be reals such that $\sum_{k=0}^{d-1} c_k =0$, and let $M_0, M_1, \dots$, $M_{d-1}$ be linear forms of $d$ variables with real algebraic coefficients. We call
\begin{equation}\label{generalroth}
( M_0,M_1, \dots, M_{d-1} ; c_0, c_1, \dots , c_{d-1} )
\end{equation}
a \textit{general Roth system} if for every $\delta >0$ there exists a $Q_1>0$ such that for any real $Q \ge Q_1$ the system of inequalities
\begin{equation}\label{systemofineq}
|M_k (m)| \le Q^{c_k-\delta} \qquad (0 \le k \le d-1)
\end{equation}
has no integer solution $m \in \mathbb{Z}^d, m \neq 0$. For a linear subspace $S$ of $\mathbb{R}^d$ of dimension $r>0$, define $c(S)$ the following way. If the rank of the forms $M_0, M_1, \dots, M_{d-1}$ on $S$ is less than $r$, then let $c(S)=\infty$. Otherwise, let $k_1$ be the smallest index such that $M_{k_1}$ is not constant zero on $S$. Let $k_2>k_1$ be the smallest index such that $M_{k_1}, M_{k_2}$ have rank 2 on $S$ etc, and define $c(S)=c_{k_1} + \cdots + c_{k_r}$. A general version of the subspace theorem \cite[Theorem 2]{Schmidt} states that \eqref{generalroth} is a general Roth system if and only if $c(S) \le 0$ for every rational linear subspace $S \neq 0$ of $\mathbb{R}^d$.

Fix an $\varepsilon >0$, and let us choose a positive integer $p$ such that $1/p < \varepsilon /(3d^2)$. Let $M_0 (x) = x_0+ \alpha_1 x_1 + \cdots + \alpha_{d-1} x_{d-1}$, and let $M_k (x) = L_k (x_1, \dots, x_{d-1})$, $1 \le k \le d-1$ be linear forms of the variables $x=(x_0, x_1, \dots, x_{d-1})$. We wish to apply the subspace theorem to $M_0, M_1, \dots, M_{d-1}$ with $\delta = 1/p$, $c_0=-1$ and $c_1, \dots, c_{d-1}$ all of the form $j/p$ for some integer $1 \le j \le p$ such that $\sum_{k=0}^{d-1}c_k=0$. The forms $M_0, M_1, \dots, M_{d-1}$ are clearly linearly independent, because $x_0$ appears only in $M_0$, and $L_1, \dots, L_{d-1}$ are linearly independent. Hence on any rational subspace $S$ of $\mathbb{R}^d$ of dimension $r>0$ the rank of $M_0, M_1, \dots, M_{d-1}$ is $r$. Moreover, choosing a nonzero rational vector $v \in S$ we have $M_0 (v) \neq 0$. Therefore in the definition of $c(S)$ we have $k_1=0$, and so
\[ c(S) = c_{k_1} + \dots + c_{k_r} \le -1 + \sum_{k=1}^{d-1} c_k =0 . \]
According to the subspace theorem we thus have a general Roth system. Since there are finitely many ways to choose such $c_1, \dots, c_{d-1}$, there exists a $Q_1 >0$ depending only on $\alpha_1, \dots, \alpha_{d-1}$, $L_1, \dots, L_{d-1}$ and $\varepsilon$ such that for any real $Q \ge Q_1$ and any such $c_1, \dots, c_{d-1}$ the system of inequalities \eqref{systemofineq} has no integral solution $m \in \mathbb{Z}^d$, $m \neq 0$.

Consider now an integer solution $n \in \mathbb{Z}^{d-1}$, $n \neq 0$ of \eqref{Schmidtbound} such that
\[ \left\| \alpha_1 n_1 + \cdots + \alpha_{d-1} n_{d-1} \right\| \ge |n|^{-d}.\]
Let the integer $Q>0$ be such that
\[ \frac{1}{(Q+1)^{1+\delta}} < \left\| \alpha_1 n_1 + \cdots + \alpha_{d-1} n_{d-1} \right\| \le \frac{1}{Q^{1+\delta}} . \]
From \eqref{Schmidtbound} we have $1/(Q+1)^{1+\delta} \le |n|^{-\varepsilon}$, and so for a given integer $Q>0$ there are finitely many such solutions $n$. It will therefore be enough to show that $Q < Q_1$.

Choosing $m_k=n_k$ for $1 \le k \le d-1$ and $m_0$ to be the integer closest to $\alpha_1 n_1 + \cdots + \alpha_{d-1} n_{d-1}$, we have $|M_0 (m)| \le Q^{c_0-\delta}$. Note $Q \le |n|^d$. From \eqref{Schmidtbound} we have
\[ -(1+\delta) \log (Q+1) + \sum_{k=1}^{d-1} \log (|L_k (n)| +1) < - \varepsilon \log |n| \le - \frac{\varepsilon}{d} \log Q . \]
Since $1+\delta - \varepsilon /d < 1+\varepsilon / (3d^2) -\varepsilon /d$, we have, for $Q$ large enough, that
\[ \sum_{k=1}^{d-1} \frac{\log (|L_k (n)|+1)}{\log Q} < 1+\frac{\varepsilon}{3d^2} -\frac{\varepsilon}{d} . \]
Let $c_k'$ be the number of the form $j/p$ for some integer $j \ge 1$, such that $c_k'-2/p < \frac{\log (|L_k (n)|+1)}{\log Q} \le c_k'-1/p$. Then we clearly have
\[ \sum_{k=1}^{d-1} c_k' \le \sum_{k=1}^{d-1} \left( \frac{\log (|L_k (n)|+1)}{\log Q} + \frac{2}{p} \right) < 1+\frac{\varepsilon}{3d^2} -\frac{\varepsilon}{d} + \frac{2d}{p} <1 . \]
By increasing $c_k'$ we can find numbers $c_k \ge c_k'$ of the form $j/p$ for some integer $1 \le j \le p$ such that $\sum_{k=0}^{d-1} c_k = -1 + \sum_{k=1}^{d-1} c_k =0$. From $\frac{\log (|L_k (n)|+1)}{\log Q} \le c_k'-1/p$ we have
\[ |M_k (m)| \le |L_k (n)|+1 \le Q^{c_k' - \delta} \le Q^{c_k - \delta} \qquad (1 \le k \le d-1) . \]
Therefore $Q < Q_1$, and we are done.
\end{proof}


\begin{thebibliography}{}
\footnotesize{

\bibitem{Beck} J.\ Beck: \textit{From Khinchin's conjecture on strong uniformity to superuniform motions.} Mathematika 61 (2015), no.\ 3, 591--707.

\bibitem{Beck2} J.\ Beck: \textit{Quantitative Uniformity of Polygon Billiards.} preprint

\bibitem{Drmotapaper} M.\ Drmota: \textit{Irregularities of continuous distributions.} Ann.\ Inst.\ Fourier (Grenoble) 39 (1989), no.\ 3, 501--527.

\bibitem{Drmotabook} M.\ Drmota, R.\ Tichy: \textit{Sequences, Discrepancies and Applications.} Lecture Notes in Mathematics, 1651. Springer-Verlag, Berlin, 1997. xiv+503 pp. ISBN: 3-540-62606-9

\bibitem{Grafakos} L.\ Grafakos: \textit{Classical Fourier Analysis.} Third Edition. Graduate Texts in Mathematics, 249. Springer, New York, 2014. xviii+638 pp. ISBN: 978-1-4939-1193-6

\bibitem{Grepstad} S.\ Grepstad, G.\ Larcher: \textit{Sets of bounded remainder for a continuous irrational rotation on $[0,1]^2$.} Acta Arith. 176 (2016), no.\ 4, 365--395.

\bibitem{Marstrand} J.\ Marstrand: \textit{On Khinchin's conjecture about strong uniform distribution.} Proc.\ London Math.\ Soc.\ (3) 21 (1970), 540--556.

\bibitem{Niederreiter} H.\ Niederreiter: \textit{Methods for estimating discrepancy.} Applications of number theory to numerical analysis (Proc.\ Sympos., Univ.\ Montr\'eal, Montreal Que., 1971), pp.\ 203--236. Academic Press, New York, 1972.

\bibitem{Randol} B.\ Randol: \textit{On the number of integral lattice-points in dilations of algebraic polyhedra.} Internat.\ Math.\ Res.\ Notices 1997, no.\ 6, 259--270.

\bibitem{Roth} K.\ Roth: \textit{Rational approximations to algebraic numbers.} Mathematika 2 (1955), 1--20; corrigendum 168.

\bibitem{Schmidt} W.\ Schmidt: \textit{Linear forms with algebraic coefficients. I.} J.\ Number Theory 3 (1971), 253--277.

\bibitem{Sinai} I.\ Cornfeld, S.\ Fomin, Ya. Sinai: \textit{Ergodic Theory.} Grundlehren der mathematischen Wissenschaften, 245. Springer-Verlag, New York, 1982. x+486 pp. ISBN: 978-1-4615-6929-9

\bibitem{Aardenne} T.\ van Aardenne--Ehrenfest: \textit{Proof of the impossibility of a just distribution of an infinite sequence of points over an interval.} Nederl.\ Akad.\ Wetensch., Proc. 48, (1945) 266--271 = Indagationes Math. 7, 71--76 (1945).

\bibitem{Weyl} H.\ Weyl: \textit{\"Uber die Gleichverteilung von Zahlen mod. Eins.} Math.\ Ann.\ 77 (1916), no.\ 3, 313--352.
}
\end{thebibliography}
\end{document}